\documentclass[10pt]{article}
\usepackage{amsmath,amssymb,amsthm}
\usepackage{color}
\usepackage[nohead,margin=1.0in]{geometry}
\usepackage{booktabs}
\usepackage{algorithm,algorithmic}
\usepackage{verbatim}
\usepackage{url}
\usepackage{graphicx}
\usepackage{epstopdf}
\graphicspath{{./pics/}}

 \usepackage{array}
\theoremstyle{plain}
\newtheorem{theorem}{Theorem}[section]
\newtheorem{remark}{Remark}[section]

\newtheorem{lemma}{Lemma}[section]

\newtheorem{proposition}{Proposition}[section]

\newtheorem{example}{Example}[section]

\numberwithin{equation}{section}

\renewcommand{\d}{\mathrm{d}}

\title{An Inverse Potential Problem for Subdiffusion: Stability and Reconstruction\thanks{The work of B. Jin is supported by UK EPSRC grant EP/T000864/1, and
the research of Z. Zhou is supported by Hong Kong RGC grant (No. 25300818).}}
\author{Bangti Jin\thanks{Department of Computer Science, University College London, Gower Street, London WC1E 6BT, UK (\texttt{b.jin@ucl.ac.uk,~bangti.jin@gmail.com})}
\and Zhi Zhou\thanks{Department of Applied Mathematics,
The Hong Kong Polytechnic University, Kowloon, Hong Kong (\texttt{zhizhou@polyu.edu.hk})}
}

\begin{document}

\maketitle
\begin{abstract}
In this work, we study the inverse problem of recovering a potential
coefficient in the subdiffusion model, which involves a Djrbashian-Caputo derivative of order
$\alpha\in(0,1)$ in time, from the terminal data. We prove that the inverse problem
is locally Lipschitz for small terminal time, under certain conditions on the initial data. This result extends the result in
\cite{ChoulliYamamoto:1997} for the standard parabolic case to the fractional case. The analysis relies on refined
properties of two-parameter Mittag-Leffler functions, e.g., complete monotonicity and asymptotics.
Further, we develop an efficient and easy-to-implement algorithm for numerically recovering the coefficient based on (preconditioned) fixed point iteration and Anderson
acceleration. The efficiency and accuracy of the algorithm is illustrated with several numerical
examples.\\
\textbf{Key words}: inverse potential problem, subdiffusion, stability, numerical reconstruction
\end{abstract}

\section{Introduction}
Let $\Omega\subset\mathbb{R}^d$ ($d=1,2,3$) be a smooth open bounded domain with a boundary $\partial\Omega$.
Consider the following initial boundary value problem for subdiffusion:
\begin{equation}\label{eqn:fde}
  \left\{\begin{aligned}
    \partial_t^\alpha u &= \Delta u + q(x)u,  \quad \mbox{in }\Omega\times(0,T],\\
    u(\cdot,0) &= u_0, \quad \mbox{in }\Omega,\\
    u &=0, \quad \mbox{on }\partial\Omega\times(0,T],
  \end{aligned}\right.
\end{equation}
where $T>0$ is the final time and $u_0$ is the initial data.
The notation $\partial_t^\alpha u$ denotes the Djrbashian-Caputo derivative
of order $\alpha\in (0,1)$ (in time), defined by \cite[p. 91]{KilbasSrivastavaTrujillo:2006}
\begin{equation*}
  \partial_t^\alpha u(t) = \frac{1}{\Gamma(1-\alpha)}\int_0^t (t-s)^{-\alpha}u'(s)\d s,
\end{equation*}
where
\begin{equation*}
  \Gamma(z)=\int_0^{\infty}s^{z-1}e^{-s}\d s,\quad \mbox{for }\Re z>0,
\end{equation*}
denotes Euler's Gamma function. For smooth functions $u$, the fractional derivative $\partial_t^\alpha u$
recovers the usual first-order derivative $u'(t)$ as $\alpha\to1^-$. The function $q$ refers to the radiativity or reaction
coefficient or potential in the standard parabolic case, dependent of the specific applications. Throughout, we
denote by $u(q)$ the solution of problem \eqref{eqn:fde} that corresponds to a given potential $q\in L^2(\Omega)$.

The model \eqref{eqn:fde} is a direct extension of the standard subdiffusion model, which has a trivial 
potential $q$ (i.e., $q\equiv0$), and can faithfully describe anomalously slow diffusion processes. At a 
microscopical level, standard subdiffusion can be described by continuous time random walk, 
where the waiting time distribution between consecutive jumps is heavy tailed with a divergent 
mean, in a manner similar to Brownian motion for normal diffusion, and the governing equation 
for the probability density function of the particle appearing at certain time instance $t$ 
and space location $x$ is of the form. Subdiffusion has been observed in several applications in
engineering, physics and biology, e.g., thermal diffusion in fractal domains \cite{Nigmatulin:1986}, and 
dispersive ion transport in column experiments \cite{HatanoHatano:1998}; see the review 
\cite{MetzlerKlafter:2000} for physical motivation and an extensive list of physical applications; 
See also the works \cite{HenryLanglandsWearne:2006,YusteAbad:2010} for the derivation of
reaction-subdiffusion models within the framework of continuous time random walk.

In this work, we study the following inverse problem for the model \eqref{eqn:fde}: given a function $g$, recover
$q\in L^2(\Omega)$ such that
\begin{equation}\label{eqn:ip}
   u(q)(\cdot,T)=g\quad \mbox{in }\Omega.
\end{equation}
The direct problem for $q\in L^2(\Omega)$ has not been extensively studied, and we give a study
in Section \ref{sec:cauchy} via an operator theoretic formulation. Let $A=-\Delta$, with its domain
$D(A)=H_0^1(\Omega)\cap H^2(\Omega)$, and the graph norm denoted by $\|\cdot\|_{D(A)}$. If
$\omega$ is an open subset of the domain $\Omega$, we identify
\begin{equation*}
  L^2(\omega) = \{f\in L^2(\Omega): (1-\chi_\omega)f=0\},
\end{equation*}
where $\chi_\omega$ denotes the characteristic function of the subset $\omega$. We denote by $\lambda_1$ the smallest eigenvalue of $A$,
and $\bar\varphi_1$ the corresponding nonnegative eigenfunction, normalized by $\|\bar\varphi_1\|_{L^\infty(\Omega)}=1$. Further, let
\begin{equation}\label{eqn:cal}
  c_\alpha = \sup_{t\geq0}tE_{\alpha,\alpha}(-t),
\end{equation}
where $E_{\alpha,\alpha}(z)$ is the two-parameter Mittag-Leffler function defined in \eqref{eqn:mlf} below.
This constant plays a crucial role in the analysis. Proposition  \ref{prop:bbd-Ealal}
gives an upper bound on $c_\alpha$, which implies $c_\alpha<\alpha$.

Then the following stability estimate holds: for small time $T$, the inverse
problem is locally Lipschitz stable. The proof of the theorem employs the
implicit function theorem, and certain estimates on the solution operators with sharp constants, which in turn uses
heavily refined properties of Mittag-Leffler functions; See Section \ref{sec:inv} for the detailed proof.
\begin{theorem}\label{thm:inverse}
Let $\frac34<\gamma<1$, $0<\epsilon<1-\frac{c_\alpha}{\alpha}$, $\mu_0,\mu_1>0$ such that $1\leq \frac{\mu_1}{\mu_0}< \frac{(1-\epsilon)\alpha}{c_\alpha}$.
Let $u_0\in D(A^{1+\gamma})$, with
\begin{equation}\label{eqn:bdd-u0}
  \mu_0\lambda_1\bar\varphi_1\leq -\Delta u_0\leq \mu_1\lambda_1\bar\varphi_1
\end{equation}
and set $\omega=\{x\in \Omega: \bar\varphi_1(x)\geq 1-\epsilon\}$. Then there exists a constant $\theta>0$ depending only on $\frac{\mu_1}{(1-\epsilon)\mu_0}$
and $\alpha$ such that if $\lambda_1T^\alpha<\theta$, then there is $V$, a neighborhood of $0$ in $L^2(\omega)$ and a constant $c$ such that
\begin{equation*}
  \|q_1-q_2\|_{L^2(\omega)}\leq c\|u(q_1)(T)-u(q_2)(T)\|_{D(A)},\quad \forall q_1,q_2\in V.
\end{equation*}
\end{theorem}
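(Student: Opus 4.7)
The plan is to derive the Lipschitz bound directly from the PDE satisfied by the difference, extracting quantitative constants at each step rather than invoking the implicit function theorem abstractly. Setting $w := u(q_1)-u(q_2)$ and $p := q_1-q_2$, subtracting the two instances of \eqref{eqn:fde} shows that $w$ solves
\begin{equation*}
\partial_t^\alpha w + Aw = q_1 w + p\,u(q_2)\quad\text{in } \Omega\times(0,T],\qquad w(0)=0,
\end{equation*}
with homogeneous Dirichlet boundary data. Evaluating this identity at $t=T$ and restricting to $\omega$ yields the pointwise formula
\begin{equation*}
p(x)\,u(q_2)(T,x) = A w(T,x) + \partial_t^\alpha w(T,x) - q_1(x)\,w(T,x).
\end{equation*}
The task reduces to inverting the multiplication by $u(q_2)(T)$ on $\omega$ and controlling each term on the right in $L^2(\omega)$ by $\|w(T)\|_{D(A)}$, with the $q_1 w(T)$ contribution absorbed into the smallness of $V$.

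The first substep is a uniform lower bound $u(q_2)(T,x)\geq c_0>0$ on $\omega$. From $-\Delta u_0\geq \mu_0\lambda_1\bar\varphi_1$ and the maximum principle one gets $u_0\geq \mu_0\bar\varphi_1\geq(1-\epsilon)\mu_0$ on $\omega$; complete monotonicity of $E_\alpha(-t^\alpha A)$ applied to the Duhamel representation of $u(q_2)$, together with Proposition~\ref{prop:bbd-Ealal}, preserves this bound up to an $O(\lambda_1T^\alpha)$ correction once $V$ and $\theta$ are small. The heart of the argument is the estimate on $\|\partial_t^\alpha w(T)\|_{L^2(\omega)}$. Starting from the Duhamel representation
\begin{equation*}
w(t) = \int_0^t (t-s)^{\alpha-1} E_{\alpha,\alpha}\bigl(-(t-s)^\alpha A\bigr)\bigl[q_1 w(s)+p\,u(q_2)(s)\bigr]\,\d s,
\end{equation*}
I would apply $A$ and decompose in the spectral basis of $A$ to run a Gr\"onwall-type estimate using the sharp bound $\sup_{\tau\geq 0}\tau E_{\alpha,\alpha}(-\tau)=c_\alpha$ from \eqref{eqn:cal}. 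Combined with $\partial_t^\alpha w = q_1 w + p\,u(q_2) - Aw$, this gives a control of $\|\partial_t^\alpha w(T)\|_{L^2(\omega)}$ by $\|A w(T)\|_{L^2(\omega)}$ plus a term of the form $(\mu_1 c_\alpha/\alpha + o(1))\,\|p\|_{L^2(\omega)}$.

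Substituting back into the pointwise identity, dividing by $c_0\approx(1-\epsilon)\mu_0$, and invoking the smallness of $V$ to handle $q_1 w(T)$, one arrives at
\begin{equation*}
\Bigl(1-\frac{\mu_1\,c_\alpha}{(1-\epsilon)\mu_0\,\alpha} - o(1)\Bigr)\|p\|_{L^2(\omega)} \leq C\,\|w(T)\|_{D(A)},
\end{equation*}
where the $o(1)$ collects the $O(\lambda_1 T^\alpha)$ and $O(\|q_1\|_{L^2(\omega)})$ corrections. Under the hypothesis $\mu_1/\mu_0<(1-\epsilon)\alpha/c_\alpha$, the coefficient on the left is strictly positive, and the dependence $\theta=\theta(\alpha,\mu_1/[(1-\epsilon)\mu_0])$ is precisely the smallness required to keep the $o(1)$ below this gap. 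The main obstacle is executing the Gr\"onwall step with the \emph{sharp} Mittag-Leffler constant $c_\alpha$: a crude bound such as $\|E_{\alpha,\alpha}(-t^\alpha A)\|\leq 1/\Gamma(\alpha)$ would replace $c_\alpha/\alpha$ with a larger constant that fails the strict inequality, so the refined monotonicity and asymptotic properties of $E_{\alpha,\alpha}$ encapsulated in Proposition~\ref{prop:bbd-Ealal} are indispensable.
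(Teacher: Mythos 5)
Your overall strategy --- isolate $p\,U(T)$ from the equation satisfied by the difference $w$ and invert the multiplication by $u(q_2)(T)$ on $\omega$ --- is essentially a quantitative rewriting of the paper's linearization argument, but the bookkeeping of the decisive constant is wrong, and the error is fatal rather than cosmetic. The claimed intermediate bound
\begin{equation*}
\|\partial_t^\alpha w(T)\|_{L^2(\omega)}\leq C\|Aw(T)\|_{L^2(\Omega)}+\Bigl(\tfrac{\mu_1 c_\alpha}{\alpha}+o(1)\Bigr)\|p\|_{L^2(\omega)}
\end{equation*}
cannot hold. At the linearized level the Duhamel formula gives exactly
\begin{equation*}
\partial_t^\alpha w(T)=-Aw(T)+pU(T)=\int_0^T-AE(T-s)\,p\,[U(s)-U(T)]\,\d s+F(T)\bigl(pU(T)\bigr),
\end{equation*}
and while the integral term is indeed of size $\frac{c_\alpha\mu_1}{\alpha\Gamma(\alpha+1)}\lambda_1T^\alpha\|p\|$ (this is where Proposition \ref{prop:bbd-Ealal} and the bound $\|U(s)-U(T)\|_{L^\infty}\lesssim\mu_1\lambda_1(T-s)^\alpha$ enter), the term $F(T)(pU(T))$ has norm comparable to $\|pU(T)\|$, i.e.\ it is $O(1)$ in $\lambda_1T^\alpha$ and not $O(c_\alpha/\alpha)$. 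Concretely, as $T\to0$ one has $\partial_t^\alpha w(T)\to p\,u_0$ while $Aw(T)\to0$, and on $\omega$, $\|p\,u_0\|_{L^2(\omega)}\geq(1-\epsilon)\mu_0\|p\|_{L^2(\omega)}$, which under the hypothesis $\mu_1/\mu_0<(1-\epsilon)\alpha/c_\alpha$ is \emph{strictly larger} than $\frac{\mu_1c_\alpha}{\alpha}\|p\|_{L^2(\omega)}$; so your estimate fails in exactly the regime the theorem addresses. The same degeneration shows the final inequality $(1-\frac{\mu_1c_\alpha}{(1-\epsilon)\mu_0\alpha}-o(1))\|p\|\leq C\|w(T)\|_{D(A)}$ is impossible with an $O(1)$ gap on the left and a $T$-uniform $C$: at $T=0$ the terminal data do not depend on $q$ at all, so the stability constant must blow up as $T\to0$.

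The missing idea is that the invertibility margin is only of order $\lambda_1T^\alpha$, and to see it you must \emph{not} move $\partial_t^\alpha w(T)$ wholesale to the right-hand side. Keeping $F(T)(pU(T))$ paired with $pU(T)$, the identity becomes $(I-F(T))(pU(T))=-Aw(T)+\int_0^T-AE(T-s)p[U(s)-U(T)]\,\d s$, whose left side is bounded below by $(1-E_{\alpha,1}(-\lambda_1T^\alpha))\|pU(T)\|\sim\frac{\lambda_1T^\alpha}{\Gamma(\alpha+1)}\|pU(T)\|$ --- the \emph{same} order $\lambda_1T^\alpha$ as the perturbation integral. The hypothesis $\frac{\mu_1}{\mu_0}<\frac{(1-\epsilon)\alpha}{c_\alpha}$ is precisely what makes the former dominate the latter; this is the content of Lemma \ref{lem:PT}, where the paper introduces $P_T$ (the integral term plus $F(T)$) acting on $\tilde p=pU(T)$ and shows $\|P_T\|\leq m(\lambda_1T^\alpha)$ with $m(0)=1$ and $m'(0)<0$, before concluding via the implicit function theorem. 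Your uniform lower bound on $u(q_2)(T)$ over $\omega$ and the absorption of $q_1w(T)$ by shrinking $V$ are sound ingredients, but without restoring the $(I-F(T))$ cancellation the central estimate, and hence the proof, does not go through.
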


\begin{remark}
The regularity condition $u_0\in D(A^{1+\gamma})$ is to ensure the well-posedness
of the direct problem with $q\in L^2(\Omega)$. The condition \eqref{eqn:bdd-u0} is to ensure pointwise
lower and upper bounds on the solution $u(0)(T)$, and the set of $u_0$ satisfying \eqref{eqn:bdd-u0}
is a convex subset of $D(A^{1+\gamma})$. The condition $\lambda_1T^\alpha<\theta$
dictates that either $T$ or $\lambda_1$ should be sufficiently small, the latter
of which holds if the domain $\Omega$ is large, since $\lambda_1$ tends to zero
as the volume of $\Omega$ tends to infinity \cite{CourantHilbert:1953}.
\end{remark}

We also develop a simple algorithm to numerically recover the potential $q$. It is based on preconditioned
fixed point iteration given in \eqref{eqn:fp}, and employs Anderson acceleration \cite{Anderson:1965} to speed up the
convergence. It extends an existing scheme proposed in \cite{Rundell:1987} for the standard parabolic problem to
subdiffusion, but
enhanced by the preconditioner $A^{-1}$ for better numerical stability and acceleration via Anderson acceleration.
The algorithm is straightforward to implement, since it involves solving one direct problem at each iteration,
and generally applicable (no sign restriction, no condition on the initial data), and when equipped with the
discrepancy principle \cite{EnglHankeNeubauer:1996,ItoJin:2015}, it is also accurate for both subdiffusion and normal
diffusion. We provide several numerical experiments to confirm the efficiency and accuracy of the algorithm, and to
illustrate the behavior of the inverse problem. The
stability result in Theorem \ref{thm:inverse} and the reconstruction algorithm represent the main contributions of this work.

Now we discuss several existing works. Inverse problems for subdiffusion are of relative recent nature,
initiated by the pioneering work \cite{ChengNakagawaYamamotoYamazaki:2009} for recovering the diffusion coefficient
from lateral Cauchy data (in the one-dimensional case) using Sturm-Liouville theory; see the work
\cite{JinRundell:2015} for an overview. The inverse potential problem for the model \eqref{eqn:fde} has
also been analyzed in several works \cite{JinRundell:2012,MillerYamamoto:2013,ZhangZhou:2017,
KaltenbacherRundell:2019,KianYamamoto:2019}. Miller and Yamamoto \cite{MillerYamamoto:2013} proved the unique
recovery from data on a space-time subdomain, using an integral transformation. Zhang and Zhou \cite{ZhangZhou:2017}
discussed the unique recovery using a fixed point argument \cite{Isakov:1991}, and derived error estimates in
the presence of data noise. Kaltenbacher and Rundell \cite{KaltenbacherRundell:2019} gave the well-posedness
of the direct problem and also proved the invertibility
of the linearized map from the space $L^2(\Omega)$ to $H^2(\Omega)$ under the condition $u_0>0$ in $\Omega$
and $q\in L^\infty(\Omega)$ using a Paley-Wiener type result, where the condition $q\in L^\infty(\Omega)$ plays
a central in the proof, which invokes a type of strong maximum principle. Further, they developed (frozen) Newton and Halley
type iterative schemes for numerically recovering the coefficient $q$ from the terminal data, and proved their convergence. Kian and
Yamamoto \cite{KianYamamoto:2019} derived a stability result for recovering a space-time dependent potential coefficient from lateral
Cauchy data. It is worth noting that the parabolic counterpart of the inverse problem \eqref{eqn:ip} has been extensively
studied \cite{PrilepkoSolovev:1987,Isakov:1991,ChoulliYamamoto:1996,ChoulliYamamoto:1997,KlibanovLiZhang:2020}.
Isakov \cite{Isakov:1991} proved the
existence and uniqueness for the inverse problem using strong maximum principle, and proposed a constructive
algorithm based on fixed point iteration. Choulli and Yamamoto \cite{ChoulliYamamoto:1996} proved a generic
well-posedness result in a Holderian space, by introducing a scalar parameter in the leading elliptic term $\Delta u$.
Later, they \cite{ChoulliYamamoto:1997} analyzed the inverse problem in a Hilbert space setting.
Theorem \ref{thm:inverse} represents an extension of the result in \cite{ChoulliYamamoto:1997} to the subdiffusion case. Note
that due to the drastic difference in solution operators, i.e., the fractional case involves Mittag-Leffler
functions, the extension is nontrivial. We refer interested readers
to \cite{SakamotoYamamoto:2011isp,LiuLiYamamoto:2019isp,LiChengLiu:2020} and references therein
for related inverse source problems, which are often employed to analyze the generic well-posedness
for the inverse potential problem.

The rest of the paper is organized as follows. In Section \ref{sec:cauchy}, we discuss the
well-posedness of the direct problem, and prove that for every $q\in L^2(\Omega)$, there exists a
unique classical solution, for suitably smooth initial data $u_0$. Then in Section \ref{sec:inv}, we
give the proof of Theorem \ref{thm:inverse}. Next, we develop the fixed point algorithm and
present its preliminary properties in
Section \ref{sec:alg}. Last, we provide several numerical experiments to illustrate
feasibility of the reconstruction algorithm. Throughout, $(\cdot,\cdot)$
denotes the $L^2(\Omega)$ inner product, and $H^s(\Omega)$ denotes the usual Sobolev space \cite{AdamsFournier:2003}. The notation
$c$ denotes a generic constant which may change at each occurrence, but it is always independent of
the coefficient $q$.

\section{Well-posedness of the Cauchy problem}\label{sec:cauchy}
First we study the well-posedness of the following abstract Cauchy problem:
\begin{equation}\label{eqn:fde-op}
  \left\{\begin{aligned}
  \partial_t^\alpha u (t) +Au(t) &= qu(t),\quad \mbox{in }(0,T),\\
  u(0) & =u_0
  \end{aligned}\right.
\end{equation}
It is a reformulation of the direct problem \eqref{eqn:fde} into an operator form. We prove that for suitably smooth
$u_0$ and any $q\in L^2(\Omega)$, problem \eqref{eqn:fde-op} has a unique classical solution $u=u(q)\in
C^\alpha([0,T];L^2(\Omega))\cap C([0,T];D(A))$. The analysis is based on a ``perturbation'' argument, developed
recently in \cite{JinLiZhou:2018} for the numerical analysis of nonlinear subdiffusion problems, where Banach
fixed point theorem and the argument
of equivalent norm family play an important role (see, e.g., \cite[Chapter 3]{Ciarlet:2013}); See also \cite{KaltenbacherRundell:2019}
for a well-posedness result under slightly different assumptions on the potential $q$.

Specifically, let $\{(\lambda_j,\varphi_j)\}_{j=1}^\infty$ be the eigenpairs of the operator $A$, with
the eigenvalues $\lambda_j$ ordered nondecreasingly and multiplicity counted, and $\{\varphi_j\}_{j=1}^\infty$ form an
orthonormal basis of $L^2(\Omega)$. For any $\gamma>0$, the notation $D(A^\gamma)$ denotes the domain of
the fractional power $A^\gamma$, with the graph norm $\|\cdot\|_{D(A^\gamma)}$, given by
\begin{equation*}
  \|v\|_{D(A^\gamma)} = \Big(\sum_{j=1}^\infty \lambda_j^{2\gamma} (v,\varphi_j)^2\Big)^\frac12.
\end{equation*}

 By viewing $qu(t)$ as the inhomogeneous term and applying Duhamel's
principle, we deduce that the solution $u(t)$ satisfies
\begin{equation*}
  u(t) = U(t) + \int_0^tE(t-s)qu(s)\d s,
\end{equation*}
where $U(t)=F(t)u_0$, and the solution operators $F(t)$ and $E(t)$ are defined by \cite{JinLiZhou:2018}
\begin{align*}
  F(t)v &= \sum_{j=1}^\infty E_{\alpha,1}(-\lambda_jt^\alpha)(\varphi_j,v)\varphi_j,\\
  E (t)v &= \sum_{j=1}^\infty t^{\alpha-1}E_{\alpha,\alpha}(-\lambda_jt^\alpha)(\varphi_j,v)\varphi_j.
\end{align*}
Here $E_{\alpha,\beta}(z)$ refers to the two-parameter Mittag-Leffler function,
defined by \cite{KilbasSrivastavaTrujillo:2006}
\begin{equation}\label{eqn:mlf}
  E_{\alpha,\beta}(z) = \sum_{k=0}^\infty \frac{z^k}{\Gamma(k\alpha+\beta)},\quad z\in \mathbb{C}.
\end{equation}

The next lemma collects smoothing properties of the operators $F$ and $E$.
The notation $\|\cdot\|$ denotes the operator norm on $L^2(\Omega)$.
\begin{lemma}\label{lem:solop}
For the operators $F(t)$ and $E(t)$, the following estimates hold
\begin{align*}
  \|F(t)\|\leq E_{\alpha,1}(-\lambda_1t^\alpha),\quad
  \|E(t)\|\leq t^{\alpha-1}E_{\alpha,\alpha}(-\lambda_1t^\alpha),\quad
  \|A^\theta E(t)\|\leq c_{\alpha,\theta}t^{(1-\theta)\alpha-1}.
\end{align*}
where the constant $c_{\alpha,\theta}>0$ depends on $\alpha$ and $\theta$.
\end{lemma}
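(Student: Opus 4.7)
The plan is to prove all three estimates by spectral decomposition on the eigenbasis $\{\varphi_j\}_{j=1}^\infty$ of $A$, and then reduce each operator bound to a scalar estimate on the Mittag-Leffler function $E_{\alpha,\beta}(-\lambda_j t^\alpha)$, optimized or maximized over $j$.

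First, for a generic $v\in L^2(\Omega)$, Parseval's identity gives
\begin{equation*}
  \|F(t)v\|_{L^2(\Omega)}^2 = \sum_{j=1}^\infty E_{\alpha,1}(-\lambda_j t^\alpha)^2 (v,\varphi_j)^2, \qquad
  \|E(t)v\|_{L^2(\Omega)}^2 = \sum_{j=1}^\infty t^{2(\alpha-1)} E_{\alpha,\alpha}(-\lambda_j t^\alpha)^2 (v,\varphi_j)^2.
\end{equation*}
Since $\alpha\in(0,1)$, both $E_{\alpha,1}(-s)$ and $E_{\alpha,\alpha}(-s)$ are completely monotone on $[0,\infty)$, hence nonnegative and nonincreasing in $s\ge 0$. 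With $\lambda_j\geq\lambda_1$, this yields the pointwise inequalities $0\leq E_{\alpha,1}(-\lambda_j t^\alpha)\leq E_{\alpha,1}(-\lambda_1 t^\alpha)$ and $0\leq E_{\alpha,\alpha}(-\lambda_j t^\alpha)\leq E_{\alpha,\alpha}(-\lambda_1 t^\alpha)$, from which the first two assertions follow immediately by pulling out the $j=1$ value and using $\sum_j (v,\varphi_j)^2=\|v\|_{L^2(\Omega)}^2$.

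For the third estimate, the spectral representation of $A^\theta$ together with the expression of $E(t)$ gives
\begin{equation*}
  \|A^\theta E(t)\| \leq \sup_{j\geq 1} \lambda_j^\theta\, t^{\alpha-1} E_{\alpha,\alpha}(-\lambda_j t^\alpha).
\end{equation*}
The key scalar ingredient is the standard decay bound $E_{\alpha,\alpha}(-s)\leq C_\alpha/(1+s)$ for all $s\geq 0$, which follows from the asymptotic expansion of $E_{\alpha,\alpha}$ as $s\to\infty$ (cf.\ the properties of Mittag-Leffler functions the paper already relies on) and continuity at $0$. Substituting $s=\lambda_j t^\alpha$ yields
\begin{equation*}
  \lambda_j^\theta\, t^{\alpha-1} E_{\alpha,\alpha}(-\lambda_j t^\alpha) \leq C_\alpha\, t^{(1-\theta)\alpha-1}\,\frac{s^\theta}{1+s},
\end{equation*}
and for $\theta\in[0,1]$ the function $s\mapsto s^\theta/(1+s)$ is bounded on $[0,\infty)$, giving the desired $c_{\alpha,\theta}t^{(1-\theta)\alpha-1}$ bound uniformly in $j$.

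The only nontrivial step is the sharp asymptotic inequality $E_{\alpha,\alpha}(-s)\leq C_\alpha/(1+s)$; everything else is spectral algebra and elementary optimization. Since this inequality is the same type of Mittag-Leffler estimate that underlies the definition of $c_\alpha$ in \eqref{eqn:cal} and is proved in detail in the paper's preliminary results on Mittag-Leffler functions, I would simply cite or apply those results rather than redo the complex-analytic asymptotic expansion here. The only mild subtlety is ensuring the implicit constant depends only on $\alpha$ and $\theta$ (not on $\lambda_1$ or $T$), which is transparent from the substitution above since the supremum of $s^\theta/(1+s)$ is independent of $j$ and $t$.
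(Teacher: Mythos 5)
Your proof is correct and follows essentially the same route as the paper: the first two bounds are obtained exactly as in the paper via the spectral expansion and the complete monotonicity (hence monotone decay) of $E_{\alpha,1}(-s)$ and $E_{\alpha,\alpha}(-s)$. For the third bound the paper simply cites \cite[Lemma 3.4]{JinLiZhou:2018}, whereas you supply the standard argument behind that citation (the uniform bound $E_{\alpha,\alpha}(-s)\leq C_\alpha/(1+s)$ combined with the boundedness of $s^\theta/(1+s)$ for $\theta\in[0,1]$), which is valid for the range of $\theta$ actually used in the paper.
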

\begin{proof}
The first estimate follows from the fact that $E_{\alpha,1}(-t)$ is completely monotone \cite{Pollard:1948}:
\begin{align*}
  &\|F(t)v\|_{L^2(\Omega)}^2 = \sum_{j=1}^\infty E_{\alpha,1}(-\lambda_jt^\alpha)^2(\varphi_j,v)^2 \\
  \leq & E_{\alpha,1}(-\lambda_1t^\alpha)^2\sum_{j=1}^\infty (\varphi_j,v)^2 = E_{\alpha,1}(-\lambda_j t^\alpha)^2\|v\|_{L^2(\Omega)}^2.
\end{align*}
The second follows similarly since $E_{\alpha,\alpha}(-t)$ is also
completely monotone, and the last is known from \cite[Lemma 3.4]{JinLiZhou:2018}.
\end{proof}

Now we can specify the function analytic setting. Let $0<\beta<(1-\gamma)\alpha$ be fixed and set
\begin{equation*}
  X = C^\beta([0,T];D(A^\gamma)) \cap C([0,T];D(A)),
\end{equation*}
with the norm given by
\begin{equation*}
  \|v\|_X=\|v\|_{C^\beta([0,T];D(A^\gamma))}+\|v\|_{C([0,T];D(A))}.
\end{equation*}

Then for every $q\in L^2(\Omega)$, we define an associated operator $L(q)$ by
\begin{equation*}
  [L(q)]f (t) = \int_0^t E(t-s)qf(s)\d s,\quad \forall f\in C^\beta([0,T];D(A^\gamma)).
\end{equation*}
The next result gives the mapping property of the operator $L(q)$.
\begin{lemma}
For any $q\in L^2(\Omega)$, $L(q)$ maps $C^\beta([0,T];D(A^\gamma))$, with $\gamma>\frac34$, into $X$.
\end{lemma}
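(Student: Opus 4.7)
The plan is to verify the two components of $X$ separately: (a) $L(q)f\in C([0,T];D(A))$, and (b) $L(q)f\in C^\beta([0,T];D(A^\gamma))$.

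The common starting point is that $\gamma>3/4$ together with $d\leq 3$ yields the Sobolev embedding $D(A^\gamma)\hookrightarrow H^{2\gamma}(\Omega)\hookrightarrow L^\infty(\Omega)$, so pointwise multiplication by $q\in L^2(\Omega)$ satisfies
\begin{equation*}
  \|qf(s)\|_{L^2(\Omega)}\leq \|q\|_{L^2(\Omega)}\|f(s)\|_{L^\infty(\Omega)}\leq c\|q\|_{L^2(\Omega)}\|f(s)\|_{D(A^\gamma)},
\end{equation*}
and the same estimate applied to $f(s)-f(t)$ shows that $s\mapsto qf(s)$ is $\beta$-H\"older into $L^2(\Omega)$ with constant controlled by $\|q\|_{L^2(\Omega)}\|f\|_{C^\beta([0,T];D(A^\gamma))}$. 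This reduces the problem to operator-norm estimates on $E(t-s)$ against $L^2$-valued data.

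For part (b), I fix $0\leq t<t+h\leq T$ and split the increment as
\begin{equation*}
  L(q)f(t+h)-L(q)f(t)=\int_t^{t+h}E(t+h-s)qf(s)\,\d s+\int_0^t[E(t+h-s)-E(t-s)]qf(s)\,\d s.
\end{equation*}
The first piece, measured in $D(A^\gamma)$, is bounded by integrating $\|A^\gamma E(r)\|\leq c r^{(1-\gamma)\alpha-1}$ from Lemma \ref{lem:solop} against the $L^2$-bound on $qf$, yielding $O(h^{(1-\gamma)\alpha})$. The second piece is handled by writing the operator difference as $\int_{t-s}^{t+h-s} E'(r)\,\d r$, invoking the standard derivative estimate $\|A^\gamma E'(r)\|\leq cr^{(1-\gamma)\alpha-2}$, and applying Fubini; telescoping gives the same $O(h^{(1-\gamma)\alpha})$. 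Since $\beta<(1-\gamma)\alpha$, multiplication by the constant $T^{(1-\gamma)\alpha-\beta}$ converts this into the required $h^\beta$ bound.

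The real obstacle is part (a), because $\|AE(r)\|\lesssim r^{-1}$ is not integrable, so $\int_0^t AE(t-s)qf(s)\,\d s$ cannot be estimated directly. I overcome this by a freeze-and-subtract splitting,
\begin{equation*}
  AL(q)f(t)=A\!\int_0^t E(t-s)\,\d s\cdot qf(t)+\int_0^t AE(t-s)q(f(s)-f(t))\,\d s.
\end{equation*}
For the first term, the identity $\int_0^t s^{\alpha-1}E_{\alpha,\alpha}(-\lambda s^\alpha)\,\d s=t^\alpha E_{\alpha,\alpha+1}(-\lambda t^\alpha)$ combined with the Mittag-Leffler identity $\lambda t^\alpha E_{\alpha,\alpha+1}(-\lambda t^\alpha)=1-E_{\alpha,1}(-\lambda t^\alpha)$ reduces it to $(I-F(t))qf(t)$, uniformly bounded by $2\|qf(t)\|_{L^2(\Omega)}$. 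In the second term the H\"older property from the setup replaces the singular factor $(t-s)^{-1}$ by the integrable $(t-s)^{\beta-1}$. Continuity of $t\mapsto AL(q)f(t)$ then follows by applying the same splitting at two nearby times and invoking dominated convergence on each piece; at $t=0$ both terms vanish, so continuity there is automatic.
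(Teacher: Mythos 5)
Your proof is correct, and it overlaps with the paper's on the crucial point: for the $C([0,T];D(A))$ component both arguments rest on the freeze-and-subtract decomposition $g(s)=(g(s)-g(t))+g(t)$ (with $g=qf$), the identity $\int_0^t AE(t-s)\,\d s = I-F(t)$, and the observation that the H\"older continuity of $g$ converts the non-integrable kernel $\|AE(t-s)\|\lesssim (t-s)^{-1}$ into the integrable $(t-s)^{\beta-1}$. The routes diverge elsewhere. For the H\"older-in-$D(A^\gamma)$ component, the paper also freezes the data — it writes the increment of the frozen piece as $\int_t^{t+\tau}E(s)g(t+\tau)\,\d s+\int_0^tE(s)[g(t+\tau)-g(t)]\,\d s$ — so that only the smoothing bound $\|A^\gamma E(s)\|\leq c_{\alpha,\gamma}s^{(1-\gamma)\alpha-1}$ from Lemma \ref{lem:solop} is needed; you instead estimate the raw increment of $L(q)f$ via the operator-derivative bound $\|A^\gamma E'(r)\|\leq cr^{(1-\gamma)\alpha-2}$, which is standard but is \emph{not} among the estimates recorded in Lemma \ref{lem:solop}, so it is an extra ingredient you would need to import or prove. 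Conversely, the paper dispatches the entire subtracted piece $v_1(t)=\int_0^tE(t-s)(g(s)-g(t))\,\d s$ by citing \cite[Lemma 3.4]{SakamotoYamamoto:2011}, which yields $v_1\in C^\beta([0,T];D(A))$ in one stroke, whereas your argument is self-contained but leaves the continuity of $t\mapsto\int_0^tAE(t-s)q(f(s)-f(t))\,\d s$ at the level of a dominated-convergence sketch; that step is fine (a second splitting of the increment together with $\int_u^{u+h}r^{-2}\,\d r\leq\min(u^{-1},hu^{-2})$ even gives H\"older continuity) but deserves to be written out. In short: same key mechanism for the hard $D(A)$ part, a trade-off elsewhere between an extra kernel estimate (yours) and an external citation (the paper's).
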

\begin{proof}
Let $f\in C^\beta([0,T];D(A^\gamma))$, $q\in L^2(\Omega)$ and let $g(t)=qf(t)$, $0\leq t\leq T.$
We split the function $L(q)f$ into two terms $L(q)f=v_1+v_2$, with
\begin{equation*}
  v_1(t) = \int_0^tE(t-s)(g(s)-g(t))\d s\quad \mbox{and}\quad v_2(t) = \int_0^tE(t-s)g(t)\d s.
\end{equation*}
Since $f\in C^\beta([0,T];D(A^\gamma))$, by Sobolev embedding theorem \cite{AdamsFournier:2003}, $g\in C^\beta([0,T];L^2(\Omega))$, and thus by
\cite[Lemma 3.4]{SakamotoYamamoto:2011}, $v_1\in C^\beta([0,T];D(A))\subset X$.
Next, for $t\in [0,T]$, $\tau>0$ such that $t+\tau\leq T$, we have
\begin{equation*}
  v_2(t+\tau)-v_2(t) = \int_{t}^{t+\tau}E(s)g(t+\tau)\d s + \int_0^tE(s)[g(t+\tau)-g(t)]\d s.
\end{equation*}
Thus by the smoothing property of $E$ in Lemma \ref{lem:solop}, we deduce
\begin{align*}
   &\quad \|A^\gamma(v_2(t+\tau)-v_2(t))\|_{L^2(\Omega)}&\\
  \leq& c_{\alpha,\gamma}\|g\|_{C([0,T];L^2(\Omega))}
  \int_t^{t+\tau}s^{(1-\gamma)\alpha-1}\d s + c_{\alpha,\gamma}\tau^\beta\|g\|_{C^\beta([0,T];L^2(\Omega))}\int_0^ts^{(1-\gamma)\alpha-1}\d s.
\end{align*}
Since
\begin{equation}\label{eqn:bdd-int}
  \int_t^{t+\tau}s^{(1-\gamma)\alpha-1}\d s \leq \int_0^\tau s^{(1-\gamma)\alpha-1}\d s = \frac{\tau^{(1-\gamma)\alpha}}{(1-\gamma)\alpha},
\end{equation}
we obtain
\begin{equation*}
  \tau^{-\beta}\|A^\gamma[v_2(t+\tau)-v_2(t)]\|_{L^2(\Omega)} \leq \frac{c_{\alpha,\gamma}}{(1-\gamma)\alpha} (
  \tau^{(1-\gamma)\alpha-\beta}+ T^{(1-\gamma)\alpha})\|g\|_{C^\beta([0,T];L^2(\Omega))}.
\end{equation*}
Since $\beta<(1-\gamma)\alpha$, $v_2\in C^\beta([0,T];L^2(\Omega))$. It remains to show $Av_2\in C([0,T];L^2(\Omega))$.
This follows from the identity
\begin{equation*}
  -Av_2(t) = -\int_0^tAE(t-s)g(t)\d s = (F(t)-I)g(t),
\end{equation*}
in view of the identity $\frac{\d}{\d t}(I-F(t))=AE(t)$ \cite{JinLiZhou:2018}. Since $F(t)-I$ is continuous on $L^2(\Omega)$,
the desired assertion follows. This completes the proof of the lemma.
\end{proof}

\begin{lemma}\label{lem:Lq}
If $q\in L^2(\Omega)$, then $I-L(q)$ has a bounded inverse in $B(X)$.
\end{lemma}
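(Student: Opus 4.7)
The plan is to prove invertibility of $I-L(q)$ via a Neumann series, by endowing $X$ with an equivalent weighted norm under which $L(q)$ becomes a strict contraction. This ``equivalent norm family'' argument for Volterra operators handles arbitrary $q\in L^2(\Omega)$ without any smallness assumption, and in particular avoids trying to control $L(q)$ in the original norm (which would require $T$ or $\|q\|_{L^2}$ to be small).

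First I would verify that $L(q)$ is a bounded operator $X\to X$. Since $X\subset C^\beta([0,T];D(A^\gamma))$, the previous lemma already delivers $L(q)f\in X$ whenever $f\in X$; a quantitative reading of that proof yields a bound $\|L(q)f\|_X\le c(T,q)\|f\|_X$. The Sobolev embedding $D(A^\gamma)\hookrightarrow L^\infty(\Omega)$, valid for $\gamma>d/4$ and hence guaranteed by the hypothesis $\gamma>\tfrac34$ in dimensions $d\le 3$, gives the pointwise bound $\|qf(t)\|_{L^2}\le c\|q\|_{L^2}\|f(t)\|_{D(A^\gamma)}$ used throughout.

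Next, for each $\sigma>0$, I would introduce the weighted norm
$$\|v\|_{X,\sigma} := \sup_{0\le t\le T} e^{-\sigma t}\|v(t)\|_{D(A)} + \sup_{0\le s<t\le T} e^{-\sigma t}\frac{\|v(t)-v(s)\|_{D(A^\gamma)}}{(t-s)^\beta}.$$
These norms satisfy $e^{-\sigma T}\|v\|_X\le \|v\|_{X,\sigma}\le \|v\|_X$, so they are mutually equivalent and $(X,\|\cdot\|_{X,\sigma})$ is a Banach space. Repeating the decomposition $L(q)f=v_1+v_2$ from the previous lemma while carrying the weight $e^{-\sigma t}$ in front of every term, the driving integral becomes
$$e^{-\sigma t}\int_0^t (t-s)^{(1-\gamma)\alpha-1} e^{\sigma s}\,\d s = \int_0^t u^{(1-\gamma)\alpha-1} e^{-\sigma u}\,\d u \;\longrightarrow\; 0 \quad\text{as } \sigma\to\infty,$$
with limit bounded by $\Gamma((1-\gamma)\alpha)\sigma^{-(1-\gamma)\alpha}$, and similarly for the Hölder-increment piece. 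Choosing $\sigma$ sufficiently large (depending on $\|q\|_{L^2}$), we obtain $\|L(q)\|_{X,\sigma}<1$, whence $I-L(q)$ is invertible on $(X,\|\cdot\|_{X,\sigma})$ via the Neumann series $\sum_{n\ge 0} L(q)^n$; invertibility on $(X,\|\cdot\|_X)$ follows from norm equivalence.

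The main technical obstacle is controlling the Hölder-seminorm contribution to $\|L(q)\|_{X,\sigma}$: the weight $e^{-\sigma t}$ must be propagated through the Hölder-increment estimate on $v_2$ (which relies on $\beta<(1-\gamma)\alpha$), as well as through the identity $-Av_2(t)=(F(t)-I)g(t)$ used to dominate $\|Av_2(t)\|_{L^2}$. The computation is mechanical but bookkeeping-heavy; the smoothing estimate $\|A^\gamma E(t)\|\le c_{\alpha,\gamma}t^{(1-\gamma)\alpha-1}$ of Lemma~\ref{lem:solop} is what ultimately produces the $\sigma$-decay, so at no step may one fall back on the weaker bound $\|E(t)\|\le t^{\alpha-1}E_{\alpha,\alpha}(-\lambda_1t^\alpha)$ at $\theta=0$.
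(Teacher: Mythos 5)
Your overall strategy is the same as the paper's: equip $X$ with a family of exponentially weighted norms, show that $L(q)$ becomes a strict contraction for a large weight parameter, and conclude by a Neumann series. However, the specific norm you propose does not actually make $L(q)$ a contraction, and the point where it fails is exactly the term you flag as ``mechanical bookkeeping.'' The $D(A)$-component of $L(q)f$ contains the piece $-Av_2(t)=(F(t)-I)g(t)=(F(t)-I)qf(t)$, which involves \emph{no time integral} and hence no smoothing in $t$. Its estimate is
\begin{equation*}
  \|(F(t)-I)qf(t)\|_{L^2(\Omega)}\leq 2\|q\|_{L^2(\Omega)}\|f(t)\|_{L^\infty(\Omega)}\leq c\|q\|_{L^2(\Omega)}\|f(t)\|_{D(A^\gamma)}.
\end{equation*}
With your uniform weight $e^{-\sigma t}$ on every component of $\|\cdot\|_{X,\sigma}$, multiplying both sides by $e^{-\sigma t}$ gives $e^{-\sigma t}\|Av_2(t)\|_{L^2(\Omega)}\leq c\|q\|_{L^2(\Omega)}\,e^{-\sigma t}\|f(t)\|_{D(A^\gamma)}\leq c\|q\|_{L^2(\Omega)}\|f\|_{X,\sigma}$, with a constant completely independent of $\sigma$. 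So $\|L(q)\|_{X,\sigma}$ is bounded below by a multiple of $\|q\|_{L^2(\Omega)}$ and cannot be driven below $1$ by taking $\sigma$ large; your claim ``choosing $\sigma$ sufficiently large we obtain $\|L(q)\|_{X,\sigma}<1$'' is false for arbitrary $q$.

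The missing idea is to \emph{stagger} the weights across the components of the norm, which is precisely what the paper does: it weights the $D(A^\gamma)$-sup by $e^{-\lambda t}$, the H\"{o}lder seminorm by $e^{-\lambda(t+1)}$, and the $D(A)$-sup by $e^{-\lambda(t+2)}$. Then the offending term is estimated by $e^{-\lambda(t+2)}\|Av(t)\|_{L^2(\Omega)}\leq c\,e^{-2\lambda}\|q\|_{L^2(\Omega)}\,e^{-\lambda t}\|f(t)\|_{D(A^\gamma)}\leq c\,e^{-2\lambda}\|q\|_{L^2(\Omega)}\|f\|_\lambda$, i.e.\ the top-order component of $\|L(q)f\|_\lambda$ is controlled by the \emph{lower}-order component of $\|f\|_\lambda$ at the cost of a factor $e^{-\lambda}$, which does decay. (The same device handles the H\"{o}lder increment over $[t,t+\tau]$, where your unshifted weight would otherwise force a case analysis in $\tau$ versus $\sigma^{-1}$.) The rest of your outline --- the embedding $D(A^\gamma)\hookrightarrow L^\infty(\Omega)$ for $\gamma>3/4\geq d/4$, the decay $\int_0^t s^{(1-\gamma)\alpha-1}e^{-\sigma s}\,\d s\leq\Gamma((1-\gamma)\alpha)\sigma^{-(1-\gamma)\alpha}$, and the condition $\beta<(1-\gamma)\alpha$ --- matches the paper and is fine; only the weight structure needs repair.
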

\begin{proof}
The proof proceeds by the argument of equivalent norm family (see, e.g., \cite[Chapter 3, Section 3.8]{Ciarlet:2013}). Specifically,
we equip the space $X$ with an equivalent family of norms $\|\cdot\|_{\lambda}$, $\lambda\geq0$, defined by
\begin{align*}
  \|f\|_\lambda &= \sup_{t\in [0,T]}e^{-\lambda t}[\|f(t)\|_{L^2(\Omega)}+\|A^\gamma f(t)\|_{L^2(\Omega)}] \\
   &\quad + \sup_{0\leq s<t\leq T}e^{-\lambda(t+1)}\frac{\|f(s)-f(t)\|_{D(A^\gamma)}}{|t-s|^\beta}+\sup_{t\in[0,T]}e^{-\lambda(t+2)}\|Af(t)\|_{L^2(\Omega)},
\end{align*}
which is equivalent to the norm on $X$, and then prove the invertibility by choosing $\lambda$
suitably. For $f\in X$, let $v=L(q)f$. Then by Sobolev embedding \cite{AdamsFournier:2003} and Lemma \ref{lem:solop},
\begin{align*}
  e^{-\lambda t}\|v(t)\|_{L^2(\Omega)} &= e^{-\lambda t} \|\int_0^t E(t-s)qf(s)\d s \|_{L^2(\Omega)}\\
    & \leq  c\int_0^t e^{-\lambda (t-s)}\|E(t-s)\| \|q\|_{L^2(\Omega)}e^{-\lambda s}\|f(s)\|_{C([0,T];D(A^\gamma))} \d s\\
    & \leq c\int_0^te^{-\lambda s}s^{\alpha-1}\d s \|q\|_{L^2(\Omega)}\|f\|_{\lambda}\leq c\lambda^{-\alpha}\|q\|_{L^2(\Omega)}\|f\|_\lambda,
\end{align*}
where the last inequality follows from changing variables $\zeta=\lambda s$ by
\begin{align*}
  \int_0^t e^{-\lambda s}s^{\alpha-1}\d s &= \lambda^{-\alpha}\int_0^{\lambda t}e^{-\zeta}\zeta^{\alpha-1}\d \zeta
     \leq \lambda^{-\alpha}\int_0^\infty \zeta^{\alpha-1}e^{-\zeta}\d \zeta = \lambda^{-\alpha}\Gamma(\alpha).
\end{align*}
Similarly,
\begin{align*}
  e^{-\lambda t}\|A^\gamma v(t)\|_{L^2(\Omega)} & \leq c\int_0^te^{-\lambda (t-s)}\|A^\gamma E(t-s)\|\|q\|_{L^2(\Omega)}e^{-\lambda s}\|f(s)\|_{C([0,T];D(A^\gamma))}\d s\\
       & \leq c\int_0^t e^{-\lambda s}s^{(1-\gamma)\alpha-1}\d s\|q\|_{L^2(\Omega)}\|f\|_\lambda\leq c\lambda^{-(1-\gamma)\alpha}\|q\|_{L^2(\Omega)}\|f\|_\lambda.
\end{align*}
Meanwhile, for $t\in [0,T)$ and $\tau>0$ with $t+\tau\leq T$, we have
\begin{equation*}
  A^\gamma(v(t+\tau)-v(t)) = \int_0^t A^\gamma E(s)q[f(t+\tau-s)-f(t-s)]\d s + \int_t^{t+\tau}A^\gamma E(s)qf(t+\tau-s)\d s,
\end{equation*}
which directly implies
\begin{align*}
 &\quad e^{-\lambda(t+\tau+1)}\|A^\gamma (v(t+\tau)-v(t))\|_{L^2(\Omega)}\\
  &\leq c\int_0^te^{-\lambda s}\|A^\gamma E(s)\|\|q\|_{L^2(\Omega)}e^{-\lambda(t+\tau-s+1)}\|f(t+\tau-s)-f(t-s)\|_{D(A^\gamma)}\d s\\
      &\quad +c\int_t^{t+\tau}e^{-\lambda(s+1)}\|A^\gamma E(s)\|\|q\|_{L^2(\Omega)}e^{-\lambda(t+\tau-s)}\|f(t+\tau-s)\|_{D(A^\gamma)}\d s\\
      &\leq c\tau^\beta\|q\|_{L^2(\Omega)}\|f\|_{\lambda}\Big(\int_0^t e^{-\lambda s}s^{(1-\gamma)\alpha-1}\d s +
       \tau^{-\beta} \int_t^{t+\tau}e^{-\lambda(s+1)}s^{(1-\gamma)\alpha-1}\d s\Big).
\end{align*}
This, the inequality \eqref{eqn:bdd-int} and the choice $\beta<(1-\gamma)\alpha$ give
\begin{align*}
e^{-\lambda(t+\tau+1)}\tau^{-\beta}\|A^\gamma (v(t+\tau)-v(t))\|_{L^2(\Omega)} &\leq c\|q\|_{L^2(\Omega)}\|f\|_{\lambda}\big(\lambda^{-(1-\gamma)\alpha} +
       \tau^{(1-\lambda)\alpha-\beta} e^{-\lambda}\big)\\
       &\leq c\|q\|_{L^2(\Omega)}\|f\|_{\lambda}\big(\lambda^{-(1-\gamma)\alpha} + e^{-\lambda}\big).
\end{align*}
In the same way, we deduce
\begin{equation*}
  \tau^{-\beta}e^{-\lambda(t+\tau+1)}\|v(t+\tau)-v(t)\|_{L^2(\Omega)}\leq c_T\|q\|_{L^2(\Omega)}\|f\|_\lambda(\lambda^{-\alpha}+e^{-\lambda}).
\end{equation*}
Combining the preceding two estimates gives
\begin{equation*}
  \sup_{0\leq s<t\leq T}e^{-\lambda(t+1)}\frac{\|v(t)-v(s)\|_{D(A^\gamma)}}{|t-s|^{\beta}} \leq c_T\|q\|_{L^2(\Omega)}\|f\|_\lambda (\lambda^{-(1-\gamma)\alpha}+e^{-\lambda}).
\end{equation*}
Next, in view of the identities $\frac{\d}{\d t}(I-F(t))=AE(t)$ \cite{JinLiZhou:2018} and $\lim_{t\to0^+}\|F(t)-I\|=0$, we deduce
\begin{equation*}
  -Av(t) = \int_0^t-AE(t-s)q[f(s)-f(t)]+F(t)qf(t)-qf(t).
\end{equation*}
Then the preceding argument and Lemma \ref{lem:solop} lead to
\begin{align*}
  e^{-\lambda(t+2)}\|Av(t)\|_{L^2(\Omega)}&\leq c \int_0^te^{-\lambda}\|AE(t-s)\|\|q\|_{L^2(\Omega)}e^{-\lambda(t+1)}\|f(s)-f(t)\|_{D(A^\gamma)}\d s\\
     & \quad + ce^{-\lambda(t+2)}\|q\|_{L^2(\Omega)}\|f(t)\|_{D(A^\gamma)}\\
     &\leq c \|q\|_{L^2(\Omega)}\|f\|_\lambda e^{-\lambda}\Big(\int_0^t(t-s)^{\beta-1}\d s+ 1\Big) \leq c_T\|q\|_{L^2(\Omega)} \|f\|_\lambda e^{-\lambda}.
\end{align*}
Combining the preceding estimates implies
\begin{equation*}
   \|L(q)f\|_\lambda\leq c_T(e^{-\lambda} +\lambda^{-(1-\gamma)\alpha})\|q\|_{L^2(\Omega)}\|f\|_\lambda.
\end{equation*}
It follows directly from this estimate that the function $\|L(q)f\|_\lambda$ tends to zero as $\lambda$
tends to infinity, and thus the operator norm $\|L(q)\|_\lambda<1$ if $\lambda$ is large enough, which shows the lemma.
\end{proof}

Now we can state the unique solvability of the Cauchy problem \eqref{eqn:fde-op}.
\begin{proposition}
If $q\in L^2(\Omega)$ and $u_0\in D(A^{1+\gamma})$. Then the Cauchy problem \eqref{eqn:fde-op} has a unique classical solution $u(q)=(I-L(q))^{-1}U$.
\end{proposition}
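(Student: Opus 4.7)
The plan is to construct the solution via the mild formulation $u=U+L(q)u$ with $U(t)=F(t)u_0$, use Lemma \ref{lem:Lq} to invert $I-L(q)$, and then upgrade the resulting $X$-element to a classical solution by appealing to the known regularizing properties of $F$ and $E$ acting on sufficiently smooth data.

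First I would verify that $U\in X$. Since $u_0\in D(A^{1+\gamma})$, continuity of $U$ into $D(A)$ is immediate: $AU(t)=F(t)(Au_0)$ with $Au_0\in D(A^\gamma)\subset L^2(\Omega)$, and Lemma \ref{lem:solop} together with strong continuity of $F(t)$ gives $AU\in C([0,T];L^2(\Omega))$. For the H\"{o}lder part, I would use the identity $\frac{\d}{\d t}(I-F(t))=AE(t)$ together with the commutation $A^\gamma F(t)v=F(t)A^\gamma v$ to write
\begin{equation*}
  A^\gamma(U(t+\tau)-U(t)) = -\int_t^{t+\tau} E(s)\,A^{1+\gamma}u_0\,\d s,
\end{equation*}
so that the bound $\|E(s)\|\leq c\, s^{\alpha-1}$ in Lemma \ref{lem:solop} together with the estimate \eqref{eqn:bdd-int} (or its obvious analogue for exponent $\alpha$) yields $\|A^\gamma(U(t+\tau)-U(t))\|_{L^2(\Omega)}\leq c\,\tau^\alpha\|A^{1+\gamma}u_0\|_{L^2(\Omega)}$. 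Since $\beta<(1-\gamma)\alpha<\alpha$, this is more than enough to place $U$ in $C^\beta([0,T];D(A^\gamma))$.

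Next, by Lemma \ref{lem:Lq} the operator $I-L(q)$ is boundedly invertible on $X$, so $u:=(I-L(q))^{-1}U\in X$ is well-defined and satisfies $u=U+L(q)u$, i.e.\ the Duhamel representation
\begin{equation*}
  u(t) = F(t)u_0 + \int_0^t E(t-s)\, q\, u(s)\,\d s.
\end{equation*}
The inhomogeneous term is $g(t):=qu(t)$; since $\gamma>\tfrac34$ and $d\leq 3$, Sobolev embedding gives $D(A^\gamma)\hookrightarrow L^\infty(\Omega)$, so $g\in C^\beta([0,T];L^2(\Omega))$ with norm controlled by $\|q\|_{L^2(\Omega)}\|u\|_X$. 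I would then invoke the standard inhomogeneous subdiffusion theory (cf.\ \cite[Lemma 3.4]{SakamotoYamamoto:2011} as already used in the previous lemma) to conclude that the convolution part satisfies $\partial_t^\alpha w+Aw=g$ pointwise in $(0,T]$ with $w(0)=0$, while the homogeneous part $F(t)u_0$ satisfies $\partial_t^\alpha F(t)u_0+AF(t)u_0=0$ with $F(0)u_0=u_0$ (valid since $u_0\in D(A)$). Adding these gives $\partial_t^\alpha u+Au=qu$ with $u(0)=u_0$, so $u$ is a classical solution; the continuity $u\in C([0,T];D(A))$ and the equation then force $\partial_t^\alpha u\in C([0,T];L^2(\Omega))$, which in particular gives $u\in C^\alpha([0,T];L^2(\Omega))$ as claimed in the paragraph preceding the proposition.

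Uniqueness is the easy part: any classical solution $\tilde u\in X$ satisfies the same Duhamel identity $\tilde u=U+L(q)\tilde u$ by Duhamel's principle, hence $(I-L(q))(\tilde u-u)=0$, and Lemma \ref{lem:Lq} forces $\tilde u=u$. The main obstacle I expect is the regularity verification in the classical step: one must be careful that $\partial_t^\alpha$ commutes with the convolution against $E$ in the required sense, and this is precisely where the H\"{o}lder-in-time regularity of $qu$ (provided by the space $X$ and the Sobolev embedding $D(A^\gamma)\hookrightarrow L^\infty(\Omega)$ that crucially exploits $\gamma>\tfrac34$) is essential; everything else reduces to a bookkeeping of the mapping properties already established.
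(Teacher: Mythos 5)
Your proposal follows essentially the same route as the paper: establish $U\in X$ (the paper simply cites the regularity theory of Sakamoto--Yamamoto, whereas you verify it explicitly via $AU(t)=F(t)Au_0$ and the identity $\frac{\d}{\d t}(I-F(t))=AE(t)$), invert $I-L(q)$ on $X$ using Lemma \ref{lem:Lq}, and upgrade the mild solution to a classical one from $qu\in C^\beta([0,T];L^2(\Omega))$ and $u_0\in D(A)$ via the inhomogeneous subdiffusion theory. The argument is correct; you merely supply more of the bookkeeping that the paper delegates to the cited references.
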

\begin{proof}
Since $u_0\in D(A^{1+\gamma})$, by the regularity theory for subdiffusion \cite[Theorems 2.1 and 2.3]{SakamotoYamamoto:2011},
$U\in X$. Thus, by Lemma \ref{lem:Lq}, the Cauchy problem \eqref{eqn:fde-op} has a unique solution $u=(I-L(q))^{-1}U
\in X$. The fact that $u$ is the classical solution to problem \eqref{eqn:fde-op} follows from the fact that $qu
\in C^\beta([0,T];L^2(\Omega))$ and $u_0\in D(A)$, by the regularity theory of subdiffusion \cite{SakamotoYamamoto:2011}.
\end{proof}

\section{Proof of Theorem \ref{thm:inverse}}\label{sec:inv}

Below we assume $u_0$ satisfies the condition of Theorem \ref{thm:inverse}. In view of the Sobolev embedding
$D(A^{1+\gamma})\hookrightarrow C^{2,\delta}(\overline{\Omega})$ for some $\delta>0$, the function $U(t)\in C^{2+\delta,
\frac{2+\delta}{2}\alpha}(\overline{\Omega}\times[0,T])$ \cite{Krasnoschok:2016}, and satisfies
\begin{equation*}
  \left\{\begin{aligned}
  \partial_t^\alpha U&= \Delta U, \quad\mbox{in } \Omega\times(0,T],\\
  U(0) &= u_0, \quad \mbox{in }\Omega,\\
  U &= 0, \quad\mbox{on }\partial\Omega\times[0,T].
  \end{aligned}\right.
\end{equation*}
The next result collect several properties of the function $U(t)$.
\begin{lemma}\label{lem:bdd-U}
The following properties hold on the function $U(t)$.
\begin{itemize}
  \item[$\rm(i)$] $\mu_0\bar\varphi_1(x)\leq u_0(x)\leq \mu_1\bar\varphi_1(x)$, $x\in \overline{\Omega}$.
  \item[$\rm(ii)$] $\mu_0E_{\alpha,1}(-\lambda_1t^\alpha)\bar\varphi_1(x)\leq U(x,t)\leq \mu_1E_{\alpha,1}(-\lambda_1t^\alpha)\bar\varphi_1(x)$, $(x,t)\in \overline{\Omega}\times[0,T]$.
  \item[$\rm(iii)$] $0\leq -\partial_t^\alpha U(x,t)\leq \|\Delta u_0\|_{L^\infty(\Omega)}\leq \mu_1\lambda_1$, $(x,t)\in\Omega\times[0,T]$
\end{itemize}
\end{lemma}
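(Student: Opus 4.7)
The plan is to establish the three estimates via maximum principles---elliptic for (i) and fractional parabolic for (ii)--(iii)---exploiting that $F(t)$ commutes with $A=-\Delta$ on sufficiently smooth data and preserves positivity.

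For (i), I would rewrite the hypothesis \eqref{eqn:bdd-u0} as $-\Delta(\mu_1\bar\varphi_1-u_0)\geq 0$ and $-\Delta(u_0-\mu_0\bar\varphi_1)\geq 0$ in $\Omega$. Both differences vanish on $\partial\Omega$, since $u_0\in D(A^{1+\gamma})\subset H_0^1(\Omega)$ and $\bar\varphi_1\in D(A)$ have zero trace. The weak minimum principle for superharmonic functions then yields the pointwise sandwich $\mu_0\bar\varphi_1\leq u_0\leq \mu_1\bar\varphi_1$ in $\overline{\Omega}$.

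For (ii), I introduce the explicit comparison functions $V_i(x,t)=\mu_i E_{\alpha,1}(-\lambda_1 t^\alpha)\bar\varphi_1(x)$ for $i=0,1$. The identity $\partial_t^\alpha E_{\alpha,1}(-\lambda_1 t^\alpha)=-\lambda_1 E_{\alpha,1}(-\lambda_1 t^\alpha)$ combined with $-\Delta\bar\varphi_1=\lambda_1\bar\varphi_1$ shows that each $V_i$ solves the homogeneous subdiffusion equation $\partial_t^\alpha V=\Delta V$ with zero Dirichlet data and initial value $\mu_i\bar\varphi_1$. By (i), the differences $V_1-U$ and $U-V_0$ solve this same equation with nonnegative initial data and zero boundary data; the weak maximum principle for time-fractional diffusion (Luchko's principle) forces them to be nonnegative, giving the stated sandwich bound on $U$.

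For (iii), the regularity hypothesis $u_0\in D(A^{1+\gamma})$ with $\gamma>\tfrac34$ implies $-\Delta u_0\in D(A^\gamma)\subset H_0^1(\Omega)$, so the spectral representations of $U=F(t)u_0$ and $F(t)$ give the rigorous commutation $-\Delta U(\cdot,t)=F(t)(-\Delta u_0)$. Since $-\Delta u_0\geq \mu_0\lambda_1\bar\varphi_1\geq 0$ and $F(t)$ preserves positivity (applied to a function with zero Dirichlet trace), I get $-\Delta U(x,t)\geq 0$; and since $F(t)$ is an $L^\infty$-contraction on such data (again by the subdiffusion maximum principle, obtained by comparison with constant supersolutions), $-\Delta U\leq \|-\Delta u_0\|_{L^\infty(\Omega)}\leq \mu_1\lambda_1\|\bar\varphi_1\|_{L^\infty(\Omega)}=\mu_1\lambda_1$. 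The equation $\partial_t^\alpha U=\Delta U$ converts this into the claimed bound on $-\partial_t^\alpha U$.

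The main obstacle is step (iii): both the commutation $-\Delta U=F(t)(-\Delta u_0)$ and the $L^\infty$-contractivity of $F(t)$ require justification beyond the formal spectral level. The former is handled by the $D(A^{1+\gamma})$ hypothesis, which ensures $-\Delta u_0$ has zero trace and enough smoothness; the latter is a standard consequence of the subdiffusion maximum principle. Once these are in place, (iii) reduces to a direct chain of comparisons, and the rest of the lemma is routine.
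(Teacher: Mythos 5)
Your proof is correct and takes essentially the same route as the paper: part (i) is the elliptic comparison argument the paper cites from Choulli--Yamamoto, part (ii) is exactly the subdiffusion maximum principle applied to the differences $V_1-U$ and $U-V_0$, and your part (iii) is the paper's argument in disguise, since the function $-\Delta U=-\partial_t^\alpha U$ you bound is precisely the negative of the auxiliary function $w=\partial_t^\alpha U$ that the paper shows solves the subdiffusion problem with initial data $\Delta u_0$ and then bounds by the same maximum principle. The differences are purely presentational.
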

\begin{proof}
Part (i) is already proved in \cite{ChoulliYamamoto:1997}. Part (ii) follows from the maximum principle for
the subdiffusion model (see, e.g., \cite[Theorem 1.1]{YikanRundellYamamoto:2016} or \cite{LuchkoYamamoto:2017}).
We only prove (iii). Let $w(x,t)=\partial_t^\alpha U(x,t)$. Then $w$ satisfies
\begin{equation*}
   \left\{\begin{aligned}
  \partial_t^\alpha w&= \Delta w, \quad\mbox{in } \Omega\times(0,T],\\
  w(0) &= \Delta u_0, \quad \mbox{in }\Omega,\\
  w &= 0, \quad\mbox{on }\partial\Omega\times[0,T].
  \end{aligned}\right.
\end{equation*}
By assumption, $\Delta u_0\leq 0$ in $\Omega$, and thus by the maximum principle for the subdiffusion
\cite[Theorem 1.1]{YikanRundellYamamoto:2016}, $0\leq -w(x,t)\leq \|\Delta u_0\|_{L^\infty(\Omega)}$.
This implies assertion (iii).
\end{proof}

Let $\omega$ be defined as in Theorem \ref{thm:inverse}. Lemma \ref{lem:bdd-U}(ii) implies that
\begin{equation*}
   u_T=\frac{1}{U(T)|_\omega}
\end{equation*}
extended by zero outside $\omega$ belongs to $L^\infty(\Omega)$.
Now we define the operator $P_T:L^2(\omega)\to L^2(\omega)$ by
\begin{equation*}
  q \mapsto \int_0^T-AE(T-s)u_T[U(s)-U(T)]q\,\d s + F(T)q.
\end{equation*}
This operator arises in the linearization of the forward map.

The next result gives an upper bound on the constant $c_\alpha$ defined in \eqref{eqn:cal}.
In particular, it indicates that $c_\alpha< \alpha<1$, which is crucial for proving Theorem \ref{thm:inverse}.
\begin{proposition}\label{prop:bbd-Ealal}
For any $\alpha\in (0,1)$,
\begin{equation*}
  c_\alpha :=\sup_{t\geq 0}tE_{\alpha,\alpha}(-t)  \leq
\frac{\alpha^2\pi}{\sin(\alpha\pi)+\alpha\pi}.
\end{equation*}
\end{proposition}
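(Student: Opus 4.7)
The plan is to derive an integral representation for $E_{\alpha,\alpha}(-t)$ via Hankel contour deformation of the Laplace inversion, and then extract the bound via an integration-by-parts identity combined with explicit trigonometric integrals. Starting from $\mathcal{L}\{\tau^{\alpha-1}E_{\alpha,\alpha}(-\tau^\alpha)\}(p) = 1/(p^\alpha+1)$ and deforming the Bromwich contour around the branch cut on the negative real axis (valid since $1/(p^\alpha+1)$ has no poles in the principal-branch cut plane for $\alpha\in(0,1)$), one obtains
\begin{equation*}
  E_{\alpha,\alpha}(-t) = \frac{\sin(\alpha\pi)}{\pi}\int_0^\infty \frac{r^\alpha e^{-r}}{r^{2\alpha}+2r^\alpha t\cos(\alpha\pi)+t^2}\,\d r.
\end{equation*}
The companion evaluation $\int_0^\infty r^{\alpha-1}/D(r)\,\d r = \pi/\sin(\alpha\pi)$ with $D(r) = r^{2\alpha}+2r^\alpha\cos(\alpha\pi)+1$, obtained by the substitution $y = r^\alpha$ followed by the elementary $\int_0^\infty \d y/(y^2+2y\cos(\alpha\pi)+1) = \alpha\pi/\sin(\alpha\pi)$, reproduces $E_{\alpha,1}(0)=1$ and is what ties the $\alpha\pi+\sin(\alpha\pi)$ in the target bound to an explicit trigonometric integral.

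Next, I would perform an integration by parts. Writing $sr^\alpha e^{-rs} = \alpha r^{\alpha-1}e^{-rs} - \frac{\d}{\d r}(r^\alpha e^{-rs})$ with $s = t^{1/\alpha}$ and integrating by parts in $r$ (the boundary contributions vanish because $r^\alpha/D(r)\to 0$ at both ends), one arrives at
\begin{equation*}
  tE_{\alpha,\alpha}(-t) = \alpha E_{\alpha,1}(-t) - \frac{2\alpha\sin(\alpha\pi)}{\pi}\int_0^\infty \frac{r^{2\alpha-1}(r^\alpha+\cos(\alpha\pi))}{D(r)^2}\,e^{-rs}\,\d r.
\end{equation*}
Consistency at $t=0$ requires (and produces) the closed form $\int_0^\infty r^{2\alpha-1}(r^\alpha+\cos(\alpha\pi))/D(r)^2\,\d r = \pi/(2\sin(\alpha\pi))$, itself verifiable via $y=r^\alpha$ and the antiderivative identity $\int y(y+\cos(\alpha\pi))/(y^2+2y\cos(\alpha\pi)+1)^2\,\d y = -y/[2(y^2+2y\cos(\alpha\pi)+1)] + (1/2)\int \d y/(y^2+2y\cos(\alpha\pi)+1)$, together with the integral evaluated above.

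The stated estimate would then follow from the inequality $(1+\sin(\alpha\pi)/(\alpha\pi))\,tE_{\alpha,\alpha}(-t) \leq \alpha E_{\alpha,1}(-t)$, which combined with $E_{\alpha,1}(-t)\leq 1$ delivers $tE_{\alpha,\alpha}(-t) \leq \alpha^2\pi/(\alpha\pi+\sin(\alpha\pi))$. The hard part is precisely this last comparison: in terms of the identity above it reads $R(t)\geq (\sin(\alpha\pi)/(\alpha\pi))\,tE_{\alpha,\alpha}(-t)$, an integral inequality between two positive functionals of $s$. The natural strategies are either a pointwise comparison of the kernels using $D(r) = (r^\alpha+\cos(\alpha\pi))^2 + \sin^2(\alpha\pi)$, or a weighted Jensen/Cauchy--Schwarz argument based on the Pollard probability density $\tilde\mu(r) = (\sin(\alpha\pi)/\pi)r^{\alpha-1}/D(r)$ (which integrates to one, reflecting $E_{\alpha,1}(0)=1$); the factor $\sin(\alpha\pi)/(\alpha\pi)$ is expected to emerge as the ratio of the two explicit integrals isolated in the previous paragraphs.
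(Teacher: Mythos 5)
Your integral representation, the companion evaluation $\int_0^\infty r^{\alpha-1}D(r)^{-1}\,\d r=\pi/\sin(\alpha\pi)$, and the integration-by-parts identity $tE_{\alpha,\alpha}(-t)=\alpha E_{\alpha,1}(-t)-R(t)$ are all correct. The fatal problem is the step you yourself flag as ``the hard part'': the comparison $\bigl(1+\tfrac{\sin(\alpha\pi)}{\alpha\pi}\bigr)\,tE_{\alpha,\alpha}(-t)\leq \alpha E_{\alpha,1}(-t)$ is not merely unproved, it is \emph{false} for large $t$. From the standard asymptotic expansions, $E_{\alpha,1}(-t)\sim t^{-1}/\Gamma(1-\alpha)$ while $E_{\alpha,\alpha}(-t)\sim -t^{-2}/\Gamma(-\alpha)=\alpha t^{-2}/\Gamma(1-\alpha)$ (the $t^{-1}$ term vanishes because $1/\Gamma(0)=0$), so
\begin{equation*}
  \frac{tE_{\alpha,\alpha}(-t)}{E_{\alpha,1}(-t)}\longrightarrow \alpha \quad\text{as } t\to\infty .
\end{equation*}
Hence any inequality $tE_{\alpha,\alpha}(-t)\leq cE_{\alpha,1}(-t)$ valid for all $t\geq0$ forces $c\geq\alpha$, whereas your proposed constant is $\alpha/(1+\tfrac{\sin(\alpha\pi)}{\alpha\pi})<\alpha$. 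Equivalently, $R(t)=\alpha E_{\alpha,1}(-t)-tE_{\alpha,\alpha}(-t)=O(t^{-2})$ while $\tfrac{\sin(\alpha\pi)}{\alpha\pi}tE_{\alpha,\alpha}(-t)=O(t^{-1})$, so $R(t)\geq \tfrac{\sin(\alpha\pi)}{\alpha\pi}tE_{\alpha,\alpha}(-t)$ fails for $t$ large. (A concrete check at $\alpha=\tfrac12$: the ratio tends to $\tfrac12$, but your inequality demands it stay below $\approx 0.3055$.) The deeper structural issue is that a single pointwise comparison against $E_{\alpha,1}(-t)$ followed by $E_{\alpha,1}(-t)\leq1$ is tight in incompatible regimes: the crude bound $E_{\alpha,1}(-t)\leq1$ is only sharp near $t=0$, while the best admissible constant $c=\alpha$ is dictated by $t\to\infty$, and the maximizer of $tE_{\alpha,\alpha}(-t)$ sits in between (near $t\approx 0.8$--$1$). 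The best such a route can yield is $c_\alpha\leq\alpha$, which is strictly weaker than the claimed bound.

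The paper circumvents exactly this obstruction by producing \emph{two} complementary bounds from the convolution identity $w(t)=\int_0^t(t-s)^{\alpha-1}E_{\alpha,\alpha}(-(t-s)^\alpha)\,\alpha E_{\alpha,1}(-s^\alpha)\,\d s$ for $w(t)=t^\alpha E_{\alpha,\alpha}(-t^\alpha)$: estimating $E_{\alpha,1}\leq1$ inside the integral gives the \emph{increasing} bound $w\leq\alpha(1-E_{\alpha,1}(-t^\alpha))$ (good for small $t$), while Simon's bound $E_{\alpha,1}(-s^\alpha)\leq\Gamma(1+\alpha)s^{-\alpha}$ gives the \emph{decreasing} bound $w\leq\tfrac{\alpha^2\pi}{\sin(\alpha\pi)}E_{\alpha,1}(-t^\alpha)$ (good for large $t$); maximizing the minimum of the two at their crossing point produces precisely $\tfrac{\alpha^2\pi}{\alpha\pi+\sin(\alpha\pi)}$. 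If you want to salvage your Hankel-contour framework, you would need to extract from your remainder $R(t)$ a lower bound of the form $\tfrac{\sin(\alpha\pi)}{\alpha\pi}\bigl(\alpha-tE_{\alpha,\alpha}(-t)-\alpha E_{\alpha,1}(-t)\bigr)$-type, i.e.\ reproduce a second, monotonically increasing majorant and balance it against $\alpha E_{\alpha,1}(-t)$ --- a single linear comparison cannot work.
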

\begin{proof}
Let $f(t)=t E_{\alpha,\alpha}(-t)$. By the asymptotics of the Mittag-Leffler function $E_{\alpha,\alpha}(-t)$, i.e., $E_{\alpha,\alpha}(-t)\leq
\frac{c}{1+t^2}$, and complete monotonicity of the function $E_{\alpha,\alpha}(-t)$,
the function $f(t)$ is nonnegative on $[0,\infty)$, and tends to zero as $t\to\infty$. Thus, there exists a maximum.
Now let $u(t)=t^{\alpha-1}E_{\alpha,\alpha}(- t^\alpha)$. Then it satisfies the following ODE
\begin{equation*}
  ^R\partial_t^\alpha u + u =0,\quad t>0,\qquad \mbox{with }\quad {_0I_t^{1-\alpha}}u(0)=1,
\end{equation*}
where the notation $^R\partial_t^\alpha$ and ${_0I_t^\beta}$ denote the Riemann-Liouville fractional derivative
and integral, respectively, based at $t=0$. Let $w(t)=tu(t)=t^\alpha E_{\alpha,\alpha}(-t^\alpha)$. Then direct
computation with the identity $^R\partial_t^\alpha t^\gamma = \frac{\Gamma(\gamma+1)}{\Gamma(\gamma-\alpha+1)}t^{\gamma-\alpha}$ leads to
\begin{align*}
   ^R\partial_t^\alpha w(t) & =  \sum_{k=0}^\infty \frac{(-1)^k}{\Gamma(k\alpha+\alpha)}{^R\partial_t^\alpha}t^{k\alpha+\alpha} =  \sum_{k=0}^\infty \frac{(-1)^k}{\Gamma(k\alpha+\alpha)}\frac{\Gamma(k\alpha+\alpha+1)}{\Gamma(k\alpha+1)}t^{k\alpha}.
\end{align*}
Using the recursion $\Gamma(z+1)=z\Gamma(z)$ twice,
\begin{align*}
               ^R\partial_t^\alpha w(t)  & =  \sum_{k=0}^\infty \frac{k\alpha(-1)^k}{\Gamma(k\alpha+1)}t^{k\alpha} + \alpha \sum_{k=0}^\infty \frac{(-1)^k}{\Gamma(k\alpha+1)}t^{k\alpha}\\
                            & =  \sum_{k=0}^\infty \frac{(-1)^k}{\Gamma(k\alpha)}t^{k\alpha} + \alpha \sum_{k=0}^\infty \frac{(-1)^k}{\Gamma(k\alpha+1)}t^{k\alpha}\\
                            & = - t^\alpha E_{\alpha,\alpha}(-t^\alpha) + \alpha E_{\alpha,1}(-t^\alpha),
\end{align*}
where the last step follows since $1/\Gamma(0)=0$. Consequently,
\begin{equation*}
  ^R\partial_t^\alpha w + w = \alpha E_{\alpha,1}(-t^\alpha)\quad \mbox{with } {_0I_t^{1-\alpha}}w(0)=0.
\end{equation*}
Thus, the solution theory for fractional ODEs indicates that $w(t)$ is represented by
\begin{equation}\label{eqn:w-repres}
  w(t) = \int_0^t (t-s)^{\alpha-1}E_{\alpha,\alpha}(-(t-s)^{\alpha})\alpha E_{\alpha,1}(-s^{\alpha})\d s.
\end{equation}
Then using the facts that $0\leq E_{\alpha,1}(-t)\leq 1$, $E_{\alpha,\alpha}(-t)\geq0$ (as a result of the
complete monotonicity of $E_{\alpha,1}(-t)$ \cite{Pollard:1948}), and the differentiation formula
\begin{equation*}
  \frac{\d}{\d t}E_{\alpha,1}(-t^\alpha) = -t^{\alpha-1}E_{\alpha,\alpha}(-t^\alpha),
\end{equation*}
we deduce
\begin{align*}
  w(t) & \leq \alpha \int_0^t (t-s)^{\alpha-1}E_{\alpha,\alpha}(-(t-s)^{\alpha}) \d s = \alpha (1-E_{\alpha,1}(-t^\alpha)) < \alpha.
\end{align*}
Meanwhile, by Simon's theorem \cite{Simon:2014},
\begin{equation*}
  E_{\alpha,1}(-t^\alpha) \leq \frac{1}{1+\Gamma(1+\alpha)^{-1}t^\alpha}<\Gamma(1+\alpha)t^{-\alpha},
\end{equation*}
 there holds
\begin{align*}
  w(t) &= \int_0^t (t-s)^{\alpha-1}E_{\alpha,\alpha}(-(t-s)^{\alpha})\alpha E_{\alpha,1}(-s^{\alpha})\d s\\
       & \leq \alpha\Gamma(\alpha+1) \int_0^t (t-s)^{\alpha-1}E_{\alpha,\alpha}(-(t-s)^\alpha)s^{-\alpha}\d s\\
       & = \alpha\Gamma(\alpha+1) \int_0^t\sum_{k=0}^\infty \frac{(-1)^k}{\Gamma(k\alpha+\alpha)}(t-s)^{k\alpha+\alpha-1}s^{-\alpha}\d s.
\end{align*}
Using the identity $\int_0^t (t-s)^{a-1}s^{-b}\d s = t^{a-b}\frac{\Gamma(a)\Gamma(1-b)}{\Gamma(a+1-b)}$ for any $a>0$ and $b<1$,
we deduce
\begin{align*}
  w(t) &\leq \alpha\Gamma(\alpha+1) \sum_{k=0}^\infty \frac{(-1)^k}{\Gamma(k\alpha+\alpha)}\frac{\Gamma(k\alpha+\alpha)\Gamma(1-\alpha)}{\Gamma(k\alpha+1)}t^{k\alpha}\\
       &\leq \alpha\Gamma(\alpha+1)\Gamma(1-\alpha) \sum_{k=0}^\infty \frac{(-1)^k}{\Gamma(k\alpha+1)}t^{k\alpha} = \alpha\Gamma(1-\alpha)\Gamma(1+\alpha)E_{\alpha,1}(-t^\alpha).
\end{align*}
Now by the recursion identity and reflection identity for the Gamma function,
\begin{equation*}
  \Gamma(1-\alpha)\Gamma(1+\alpha)=\alpha\Gamma(1-\alpha)\Gamma(\alpha) = \frac{\alpha\pi}{\sin(\alpha\pi)}.
\end{equation*}
Combining the preceding estimates leads directly to
\begin{equation}\label{eqn:Ealal-sup}
  \sup_{t\geq 0} tE_{\alpha,\alpha}(-t) \leq \alpha \max_{t\ge0}\min\Big(\frac{\alpha\pi}{\sin(\alpha\pi)}E_{\alpha,1}(-t^\alpha),1-E_{\alpha,1}(-t^\alpha)\Big).
\end{equation}
By the complete monotonicity of $E_{\alpha,1}(-t)$, the first term $\frac{\alpha\pi}{\sin(\alpha\pi)}E_{\alpha,1}(-t^\alpha)$
in the bracket is monotonically decreasing, whereas the second term $1-E_{\alpha,1}(-t^\alpha)$ is monotonically increasing.
Thus, one simple upper bound is obtained by equating these two terms, which directly gives
\begin{equation*}
E_{\alpha,1}(-t_*^\alpha)  = \frac{1}{1+\frac{\alpha\pi}{\sin(\alpha\pi)}}.
\end{equation*}
Upon substituting it back to \eqref{eqn:Ealal-sup} and noting the complete monotonicity of $E_{\alpha,1}(-t)$, we deduce
\begin{equation*}
  c_\alpha \leq \alpha \frac{\frac{\alpha\pi}{\sin(\alpha\pi)}}{1+\frac{\alpha\pi}{\sin(\alpha\pi)}} = \frac{\alpha^2\pi}{\alpha\pi + \sin (\alpha\pi)}.
\end{equation*}
This completes the proof of the proposition.
\end{proof}

\begin{remark}
Note that the identities
\begin{align*}
 \lim_{\alpha\to0^+} \frac{\alpha\pi}{\alpha\pi+\sin\alpha\pi } = \frac12\quad\mbox{and}\quad
 \lim_{\alpha\to1^-} \frac{\alpha\pi}{\alpha\pi+\sin\alpha\pi } = 1.
\end{align*}
and the function $f(\alpha)=\frac{\alpha\pi}{\alpha\pi+\sin\alpha\pi}$ is strictly increasing
in $\alpha$ over the interval $(0,1)$. Thus, the factor is strictly less than 1 for any $\alpha\in (0,1)$.
Note also that for the limiting case $\alpha=1$, the constant $c_1=\sup_{t\geq 0}te^{-t}=e^{-1}$,
which is much sharper than the preceding bound. Since the function $E_{\alpha,\alpha}(-t)$ is actually continuous
in $\alpha$, one may refine the bound on $c_\alpha$ slightly for $\alpha$ close to unit. Further,
it is worth noting that the integral representation \eqref{eqn:w-repres} for $w(t)$ can also be deduced from
the following Cristoffel-Darboux type formula for Mittag-Leffler functions, i.e.,
\begin{align*}
  \int_0^ts^{\gamma-1}E_{\alpha,\gamma}(ys^\alpha)(t-s)^{\beta-1}E_{\alpha,\beta}(z(t-s)^\alpha)\d s
   = \frac{yE_{\alpha,\gamma+\beta}(yt^\alpha)-zE_{\alpha,\gamma+\beta}(zt^\alpha)}{y-z}t^{\gamma+\beta-1},
\end{align*}
where $y\neq z$ are any complex numbers. Consequently, by a limiting argument,
\begin{align*}
\int_0^ts^{\alpha-1}E_{\alpha,\alpha}(-s^\alpha)E_{\alpha,1}(-(t-s)^\alpha)\d s
   = \frac{\d}{\d\lambda} \lambda t^\alpha E_{\alpha,\alpha+1}(\lambda t^\alpha)|_{\lambda=-1},
\end{align*}
which upon simplification gives directly the formula \eqref{eqn:w-repres} for $w(t)$.
\end{remark}

\begin{remark}
Proposition \ref{prop:bbd-Ealal} provides an upper bound on the constant $c_\alpha$. In Fig. \ref{fig:cal}(a),
we plot the function $\alpha^{-1}tE_{\alpha,\alpha}(-t)$ versus $t$ for several different
fractional orders, where the Mittag-Leffler function $E_{\alpha,\alpha}(-t)$ is computed using an algorithm
developed in \cite{SeyboldHilfer:2008}.
Clearly, for any fixed $\alpha$, the function $tE_{\alpha,\alpha}(-t)$ first increases
with $t$ and then decreases, and there is only one global maximum. The maximum is
always achieved at some $t^*$ between $0.8$ and $1$, a fact that remains to be established, and the maximum value decreases
with $\alpha$. The optimal constant $\frac{c_\alpha}{\alpha}$ versus the upper bound
$\frac{\alpha\pi}{\sin\alpha\pi + \alpha\pi}$ is shown in Fig. \ref{fig:cal}(b). Note that $\frac{c_\alpha}{\alpha}$ is strictly increasing
with respect to $\alpha$, and the upper bound in Proposition \ref{prop:bbd-Ealal} is about
three times larger than the optimal one $\frac{c_\alpha}{\alpha}$. This is attributed to
the fact that the derivation employs upper bounds of the Mittag-Leffler function $E_{\alpha,1}(-t)$ that are
valid on the entire real line, instead of sharper ones on a finite interval, e.g., $[0,1]$.
The fact that the ratio $\frac{c_\alpha}{\alpha}$ increases with $\alpha$ implies that the
smaller the fractional order $\alpha$ is, there is a larger degree of freedom for choosing
the parameter $\epsilon$ as well as $\mu_1/\mu_0$ in Theorem \ref{thm:inverse}, which partly indicates
the potential beneficial effect of subdiffusion on the inverse potential problem.
\begin{figure}[hbt!]
\centering
\setlength{\tabcolsep}{0pt}
  \begin{tabular}{cc}
    \includegraphics[width=0.45\textwidth,trim={1.5cm 0 1cm 0.5cm}]{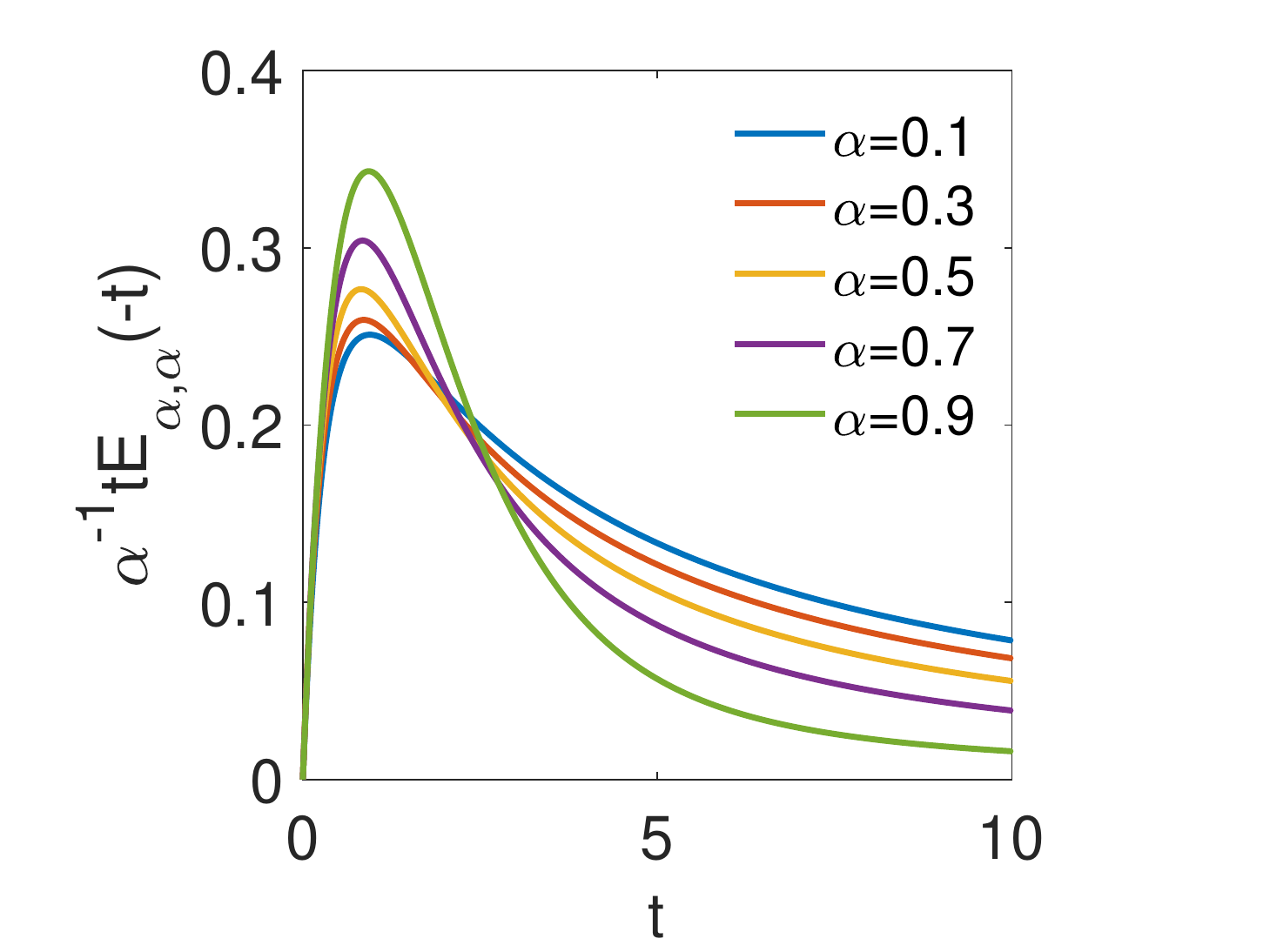} & \includegraphics[width=0.45\textwidth,trim={1.5cm 0 1cm 0.5cm}]{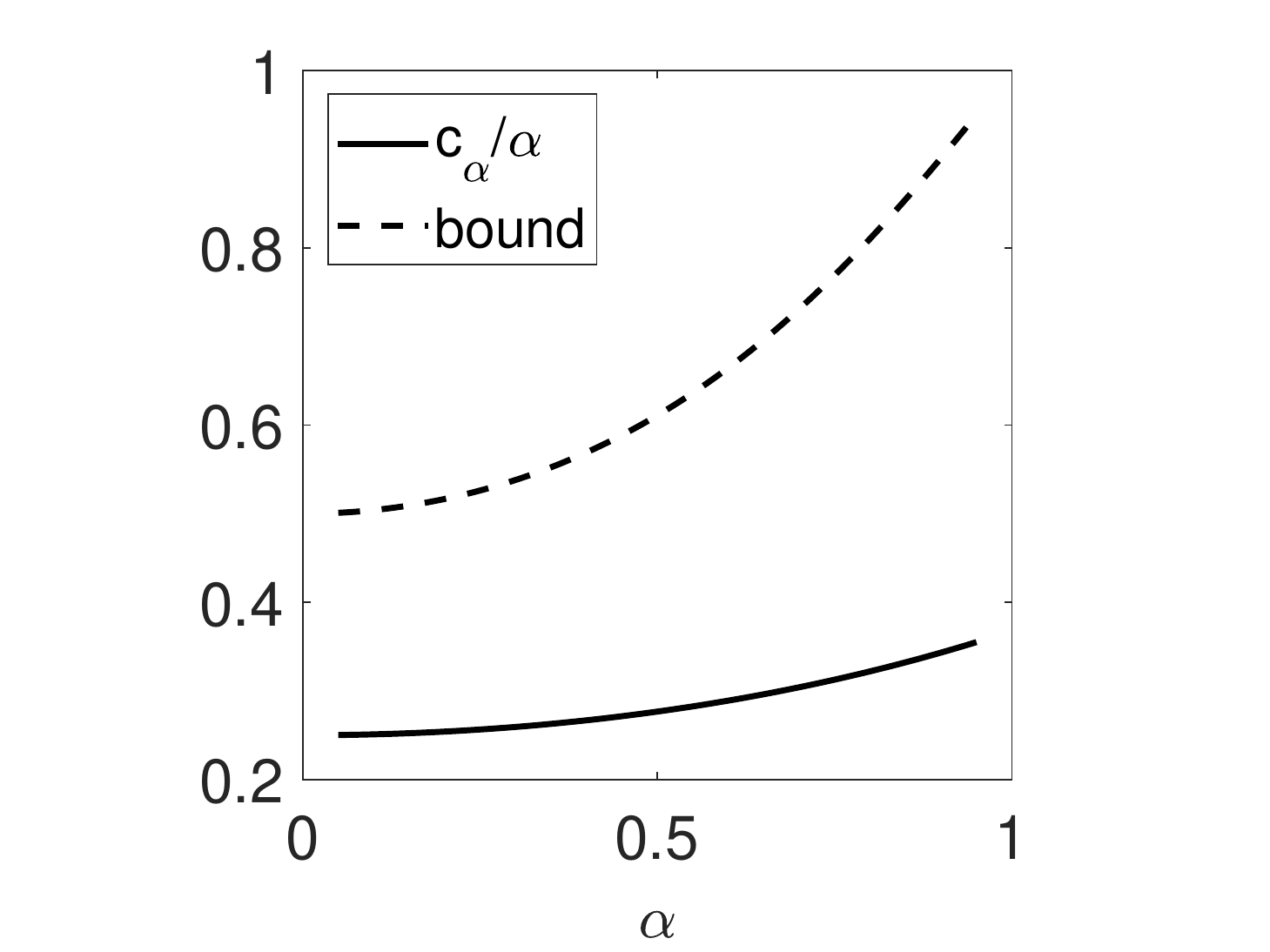}\\
    (a) $\alpha^{-1}tE_{\alpha,\alpha}(-t)$ & (b) $\frac{c_\alpha}{\alpha}$ versus $\frac{\alpha\pi}{\alpha\pi+\sin\alpha\pi}$
  \end{tabular}
  \caption{The function $\alpha^{-1}tE_{\alpha,\alpha}(-t)$ and its maximum $\frac{c_\alpha}{\alpha}$ versus the upper bound $\frac{\alpha\pi}{\alpha\pi+\sin\alpha\pi}$ in Proposition \ref{prop:bbd-Ealal}.\label{fig:cal}}
\end{figure}
\end{remark}

The next result gives the invertibility of the operator $I-P_T$ on $L^2(\omega)$.
\begin{lemma}\label{lem:PT}
Under the assumptions of Theorem \ref{thm:inverse}, there exists a $\theta>0$ depending only on $\alpha$ and $\epsilon$
such that if $\lambda_1T^\alpha<\theta$, then the operator $I-P_T$ has a bounded inverse in $B(L^2(\omega))$.
\end{lemma}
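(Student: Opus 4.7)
My plan is to show $\|P_T\|_{B(L^2(\omega))}<1$ and then invoke the Neumann series $(I-P_T)^{-1}=\sum_{n\ge 0}P_T^n$. The bound $\|F(T)q\|_{L^2(\omega)}\le E_{\alpha,1}(-\lambda_1T^\alpha)\|q\|$ is immediate from Lemma \ref{lem:solop}, so the work concentrates on estimating the singular integral $\int_0^T -AE(T-s)u_T[U(s)-U(T)]q\,\d s$. The core difficulty is that $\|AE(t)\|\le c_\alpha/t$ (a direct consequence of the definition of $c_\alpha$) is non-integrable at $t=0$, so I must exploit the vanishing of $u_T[U(s)-U(T)]$ at $s=T$.

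To this end I would derive the Lipschitz-type pointwise estimate, valid on $\omega$,
\begin{equation*}
u_T(x)\,[U(s,x)-U(T,x)]\le (T-s)\,\frac{\mu_1\lambda_1 s^{\alpha-1}E_{\alpha,\alpha}(-\lambda_1 s^\alpha)}{\mu_0 E_{\alpha,1}(-\lambda_1T^\alpha)}.
\end{equation*}
Three ingredients enter: the identity $U(s)-U(T)=\int_s^T AE(\tau)u_0\,\d\tau$; the positivity of $E(\tau)$ combined with the hypothesis $-\Delta u_0\le\mu_1\lambda_1\bar\varphi_1$, which gives $AE(\tau)u_0=E(\tau)(-\Delta u_0)\le\mu_1\lambda_1\tau^{\alpha-1}E_{\alpha,\alpha}(-\lambda_1\tau^\alpha)\bar\varphi_1$ pointwise; and the monotone decrease of $\tau^{\alpha-1}E_{\alpha,\alpha}(-\lambda_1\tau^\alpha)$ on $(0,\infty)$ (both factors decrease for $\alpha\in(0,1)$, the second by complete monotonicity), so that $\int_s^T$ is dominated by $(T-s)$ times the value at $\tau=s$. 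The lower bound $U(T)\ge\mu_0 E_{\alpha,1}(-\lambda_1T^\alpha)\bar\varphi_1$ of Lemma \ref{lem:bdd-U}(ii) then lets the $\bar\varphi_1$ factor cancel, delivering the sharp constant displayed.

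Combining the multiplier estimate with $\|AE(T-s)\|\le c_\alpha/(T-s)$, the $(T-s)$ factor cancels the singular $(T-s)^{-1}$, and integration via $\int_0^T\lambda_1 s^{\alpha-1}E_{\alpha,\alpha}(-\lambda_1 s^\alpha)\,\d s=1-E_{\alpha,1}(-\lambda_1T^\alpha)$ yields, writing $E=E_{\alpha,1}(-\lambda_1T^\alpha)$,
\begin{equation*}
\|P_T\|_{B(L^2(\omega))}\le E+\frac{c_\alpha\mu_1(1-E)}{\mu_0 E},
\end{equation*}
which is strictly less than $1$ precisely when $E>\mu_1 c_\alpha/\mu_0$. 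Under the standing hypothesis $\mu_1/\mu_0<(1-\epsilon)\alpha/c_\alpha$ of Theorem \ref{thm:inverse}, it suffices that $E>(1-\epsilon)\alpha$. Since $E_{\alpha,1}(-t)$ is continuous and strictly decreasing from $1$ to $0$ on $[0,\infty)$, and $(1-\epsilon)\alpha\in(0,1)$ by the constraints on $\epsilon$ and $\alpha$, there is a unique $\theta>0$ depending only on $\alpha$ and $\epsilon$ with $E_{\alpha,1}(-\theta)=(1-\epsilon)\alpha$; for $\lambda_1T^\alpha<\theta$ one then has $\|P_T\|<1$, and the Neumann series furnishes the bounded inverse on $L^2(\omega)$.

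The principal obstacle is crafting the Lipschitz-type pointwise bound: one must simultaneously capture the vanishing of the multiplier at $s=T$ with the correct $(T-s)$ factor (to absorb the non-integrable singularity of $\|AE(T-s)\|$) and exploit the $\bar\varphi_1$-cancellation between $u_T$ and the bound on $U(s)-U(T)$, so that the resulting threshold aligns with $(1-\epsilon)\alpha/c_\alpha$ and the constant $c_\alpha$ plays the critical role it is designed for.
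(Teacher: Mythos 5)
Your proof is correct and reaches the stated conclusion, but the central estimate is obtained by a genuinely different route than the paper's. The paper bounds the multiplier via the fractional-integral identity $U(t)-U(0)={_0I_t^\alpha}\Delta U(t)$ together with the uniform bound $0\le-\Delta U\le\mu_1\lambda_1$ from Lemma \ref{lem:bdd-U}(iii), which yields the H\"older-type estimate $\|U(s)-U(T)\|_{L^\infty(\Omega)}\le\frac{\mu_1\lambda_1}{\Gamma(\alpha+1)}(T-s)^\alpha$; multiplying by $\|u_T\|_{L^\infty(\Omega)}\le(\mu_0(1-\epsilon)E_{\alpha,1}(-\lambda_1T^\alpha))^{-1}$ costs a factor $(1-\epsilon)^{-1}$, and the resulting bound $m(\lambda_1T^\alpha)$ satisfies $m(0)=1$, so the paper must check $m'(0)<0$ --- which is precisely where the hypothesis $\mu_1/\mu_0<(1-\epsilon)\alpha/c_\alpha$ enters --- to extract a threshold $\theta$. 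You instead write $U(s)-U(T)=\int_s^T E(\tau)(-\Delta u_0)\,\d\tau$, dominate the integrand by $\mu_1\lambda_1\tau^{\alpha-1}E_{\alpha,\alpha}(-\lambda_1\tau^\alpha)\bar\varphi_1$, pull out $(T-s)$ by monotonicity, and let the $\bar\varphi_1$ factor cancel against the lower bound on $U(T)$, so no $(1-\epsilon)$ is lost; the exact evaluation $\int_0^T\lambda_1 s^{\alpha-1}E_{\alpha,\alpha}(-\lambda_1 s^\alpha)\,\d s=1-E$ (with $E=E_{\alpha,1}(-\lambda_1T^\alpha)$) then gives the cleaner bound $E+c_\alpha\mu_1(1-E)/(\mu_0E)$. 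What your version buys: a sharper norm bound (your first term is the paper's multiplied by the factor $(1-\epsilon)\alpha<1$, since $1-E\le\lambda_1T^\alpha/\Gamma(\alpha+1)$), an explicit threshold defined by $E_{\alpha,1}(-\theta)=(1-\epsilon)\alpha$ rather than an existence argument from $m'(0)<0$, and in fact a contraction under the weaker condition $\mu_1/\mu_0<1/c_\alpha$.

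The one step you should justify explicitly is the order-preservation of $E(\tau)$, i.e.\ that $f\ge0$ implies $E(\tau)f\ge0$ pointwise, which you need both to pass from $-\Delta u_0\le\mu_1\lambda_1\bar\varphi_1$ to $E(\tau)(-\Delta u_0)\le\mu_1\lambda_1E(\tau)\bar\varphi_1$ and to see that $U$ is nonincreasing in time (so the product you bound is nonnegative and the pointwise bound is indeed an $L^\infty$ bound). This is true --- it follows from the Wright-function subordination of $E(\tau)$ to the positivity-preserving heat semigroup, or from the maximum principle for the inhomogeneous subdiffusion problem --- but it is a maximum-principle-type input of the same depth as the paper's appeal to Lemma \ref{lem:bdd-U}(iii), not a formal consequence of the spectral representation, so it deserves a citation.
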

\begin{proof}
First, we bound $\|tAE(t)\|$. Using the eigenpairs $\{(\lambda_j,\varphi_j)\}_{j=1}^\infty$ of the operator $A$, we deduce
\begin{equation*}
  E(t) v = \sum_{j=1}^\infty t^{\alpha-1}E_{\alpha,\alpha}(-\lambda_jt^\alpha)(\varphi_j,v)\varphi_j,\quad \forall v\in L^2(\Omega).
\end{equation*}
Thus,
\begin{equation*}
  \|tAE(t)v\|^2 = \sum_{j=1}^\infty (\lambda_jt^\alpha E_{\alpha,\alpha}(-\lambda_jt^\alpha))^2(v,\varphi_j)^2.
\end{equation*}
Since $\sup_{t\in[0,\infty]}|tE_{\alpha,\alpha}(-t)|\leq c_\alpha<\alpha$, in view of Proposition \ref{prop:bbd-Ealal},
\begin{equation*}
  \|AE(t)v\| \leq c_\alpha t^{-1}.
\end{equation*}
Meanwhile, using the governing equation for $U(t)$, we have
\begin{equation*}
  U(t)-U(0) = {_0I_t^\alpha} \Delta U(t),
\end{equation*}
which together with the fact $\Delta U(x,t)\leq 0$  implies
\begin{align*}
 U(t)-U(T) &= ({_0I_t^\alpha}\Delta U)(t) - ({_0I_t^\alpha}\Delta U)(T)\\
            & = \frac{1}{\Gamma(\alpha)}\int_0^t[(T-s)^{\alpha-1}-(t-s)^{\alpha-1}](-\Delta U(s)) \d s+ \frac{1}{\Gamma(\alpha)}\int_t^T{(T-s)^{\alpha-1}}(-\Delta U(s))\d s.
\end{align*}
Since $(T-s)^{\alpha-1}-(t-s)^{\alpha-1}\leq 0$ and $-\Delta U(x,t)\geq 0$ in $\Omega\times[0,T]$, by Lemma \ref{lem:bdd-U}(iii)
\begin{align*}
         U(t)-U(T) & \leq \frac{1}{\Gamma(\alpha)}\int_t^T(T-s)^{\alpha-1}(-\Delta U(s))\d s \leq \frac{(T-t)^\alpha}{\Gamma(\alpha+1)}  \mu_1\lambda_1.
\end{align*}
Similarly,
\begin{align*}
  U(T)-U(t) & \leq \frac{1}{\Gamma(\alpha)}\int_0^t[(t-s)^{\alpha-1}-(T-s)^{\alpha-1}](-\Delta U(s)) \d s\\
            & \leq \frac{\mu_1\lambda_1}{\Gamma(\alpha+1)} (t^\alpha + (T-t)^\alpha - T^\alpha) \leq \frac{(T-t)^\alpha}{\Gamma(\alpha+1)}\mu_1\lambda_1.
\end{align*}
Consequently, there holds
\begin{equation*}
  \|U(s)-U(T)\|_{L^\infty(\Omega)} \leq {\frac{1}{\Gamma(\alpha+1)}}\mu_1\lambda_1(T-s)^\alpha.
\end{equation*}
Lemma \ref{lem:bdd-U}(ii) implies
\begin{equation*}
  \|u_T\|_{L^\infty(\Omega)}\leq \frac{1}{\mu_0(1-\epsilon)E_{\alpha,1}(-\lambda_1T^\alpha)}.
\end{equation*}
The preceding two estimates and Lemma \ref{lem:solop} imply
\begin{align*}
  \|P_T\|_{B(L^2(\omega))}& \leq \int_0^T\|AE(T-s)\|\|u_T\|_{L^\infty(\Omega)}\|U(s)-U(T)\|_{L^\infty(\Omega)}\d s + \|F(T)\|\\
     & \leq \int_0^T c_\alpha(T-s)^{-1}\frac{\lambda_1\mu_1}{\Gamma(\alpha+1)}(T-s)^\alpha\frac{1}{\mu_0(1-\epsilon)E_{\alpha,1}(-\lambda_1T^\alpha)}\d s + E_{\alpha,1}(-\lambda_1T^\alpha)\\
     &=\frac{c_\alpha \mu_1}{\mu_0(1-\epsilon)\alpha\Gamma(\alpha+1)E_{\alpha,1}(-\lambda_1T^\alpha)}\lambda_1T^\alpha + E_{\alpha,1}(-\lambda_1T^\alpha).
\end{align*}
Let $m(x)$ be defined by
\begin{equation*}
  m(x) = \frac{c_\alpha\mu_1}{\mu_0 (1-\epsilon)\alpha\Gamma(\alpha+1)}\frac{x}{E_{\alpha,1}(-x)} + E_{\alpha,1}(-x).
\end{equation*}
Straightforward computation shows
\begin{equation*}
  m'(x) = \frac{c_\alpha \mu_1}{\mu_0(1-\epsilon)\alpha\Gamma(\alpha+1)}\frac{E_{\alpha,1}(-x)-xE'_{\alpha,1}(-x)}{E_{\alpha,1}(-x)^2}+ E_{\alpha,1}'(-x).
\end{equation*}
Thus, $m(0)=1$ and by Proposition \ref{prop:bbd-Ealal},
\begin{equation*}
  m'(0) = \frac{c_\alpha \mu_1}{\mu_0(1-\epsilon)\alpha\Gamma(\alpha+1)}-\frac{1}{\Gamma(\alpha+1)} = {\left[\frac{c_\alpha \mu_1}{\mu_0(1-\epsilon)\alpha}-1\right]\frac{1}{\Gamma(\alpha+1)}}<0,
\end{equation*}
under the given conditions on $\epsilon,\mu_0$ and $\mu_1$ in Theorem \ref{thm:inverse}.
Thus, there exists a $\theta>0$ such that whenever $x<\theta$, $m(x)<1$, and
accordingly, for $\lambda_1T^\alpha$ sufficiently close to zero, $P_T$ is a contraction on $L^2(\omega)$. Then
by Neumann series expansion, $I-P_T$ is invertible and $(I-P_T)^{-1}$ is bounded. This completes the proof of the lemma.
\end{proof}

Now we introduce the trace operator: $\mathrm{tr}:X\to D(A)$, $v\mapsto v(T)$. Then $\mathrm{tr}\in B(X,D(A))$
and $\|\mathrm{tr}\|_{B(X,D(A))}\leq 1$. Finally, we can present the proof of Theorem \ref{thm:inverse}.
\begin{proof}
With Lemma \ref{lem:PT} at hand, the proof is identical with that of \cite{ChoulliYamamoto:1997}. We only include a
proof for the convenience of readers. We define the mapping: $K: L^2(\omega)\to L^2(\omega)$, $q\mapsto
[-Au(q)(T)]|_\omega = [-A\mathrm{tr}(I-L(q))^{-1}U]|_\omega$.
Clearly, $K$ is continuously Fr\'{e}chet differentiable, cf. Lemma \ref{lem:Lq}, and its derivative $K'$ at
$q\in L^2(\omega)$ in the direction $p$ is given by
\begin{equation*}
  K'(q)[p] = [-A\mathrm{tr}(I-L(q))^{-1}L(p)(I-L(q))^{-1}U]|_\omega.
\end{equation*}
Let $Q_T=K'(0)=[-A\mathrm{tr}L(\cdot)U]|_\omega$. Then
\begin{equation*}
  Q_T(p)=\Big[\int_0^T-AE(T-s)p[U(s)-U(T)]\d s + (F(T)-I)pU(T)\Big]\Big|\big._\omega.
\end{equation*}
We define a multiplication operator $M:L^2(\omega)\to L^2(\omega)$, $p\to U(T)p$. Then $M$
is invertible, and its inverse is exactly the multiplication operator by $u_T$.
Consequently, $Q_TM^{-1}=P_T-I$. By Lemma \ref{lem:PT}, $(P_T-I)^{-1}$ belongs
to $B(L^2(\omega))$. Therefore, $Q_T$ has a bounded inverse and $Q_T^{-1}=M^{-1}(P_T-I)^{-1}$.
By the implicit function theorem, $K$ is locally a $C^1$-diffeomorphism from
a neighborhood of $0$ onto a neighborhood of $K(0)$. In particular, $K^{-1}$ is
Lipschitz continuous in a neighborhood of $K(0)$. Then Theorem \ref{thm:inverse}
follows by noting the following inequality
\begin{equation*}
  \|Au(q_1)(T)|_\omega-Au(q_2)(T)|_\omega\|_{L^2(\omega)}\leq \|u(q_1)(T)-u(q_2)(T)\|_{D(A)},
\end{equation*}
for any $q_1,q_2\in L^2(\omega)$.
\end{proof}

\section{Fixed point algorithm}\label{sec:alg}

Now we propose a simple fixed point algorithm to find the potential $q$ from the terminal observation.
Given a noisy version of the exact data $g=u(q^\dag)(T)$ corresponding to the exact potential $q^\dag$
and an initial guess $q^0$, we employ the following fixed point iteration
\begin{equation}\label{eqn:fp}
   q^{k+1} = F(q^k), \quad \mbox{with }F(q) = q + \lambda A^{-1}(u(q)(T)-g),
\end{equation}
where $\lambda>0$ is a relaxation parameter and $A=-\Delta$ is the negative Laplacian with a zero Dirichlet
boundary condition. In the absence of the preconditioning operator $A^{-1}$, the iteration \eqref{eqn:fp}
was proposed in \cite{Rundell:1987} for the standard parabolic problem. For both normal diffusion and
subdiffusion, the unpreconditioned version works very robustly for exact data, but it tends to suffer from severe
numerical instability in the presence of data noise. This is attributed to the fact that the noise in the data
$g$ is amplified by a factor $\lambda$ at each iteration, in view of the smoothing property of the
solution operator, and the noise effect accumulates very rapidly so as to completely spoil the reconstruction after a few iterations. The preconditioner $A^{-1}$ is to mitigate the
deleterious effect of noise in the observation $g$ by implicitly filtering out the high-frequency components
present in the noise thereby achieving a form of regularization \cite{EnglHankeNeubauer:1996}.
Numerically, the scheme is straightforward to implement since it requires only one forward solve,
and the preconditioning step incurs very little extra computational effort.

We have the following contractive property on the cone $S=\{h\in C(\overline{\Omega}): h(x)\geq 0\}$.
\begin{proposition}
For any nonnegative $u_0\not\equiv0$ and any $q\in S$,  the linearized map $F'$ is contractive on $S$ in the following sense
\begin{equation*}
  \|F'(q)h\|_{L^2(\Omega)}<\|h\|_{L^2(\Omega)},\quad \forall h\in S,
\end{equation*}
provided that the relaxation parameter $\lambda$ is sufficiently small.
\end{proposition}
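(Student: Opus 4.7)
The plan is to differentiate $F$ explicitly, identify the linearized forward map, use positivity from the maximum principle together with smoothing estimates, and then choose $\lambda$ small.

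First, I would compute the Fréchet derivative: differentiating $F(q) = q + \lambda A^{-1}(u(q)(T) - g)$ in direction $h$ gives
\begin{equation*}
F'(q) h = h + \lambda A^{-1} v(T),
\end{equation*}
where $v := u'(q) h$ satisfies the linearized problem
\begin{equation*}
\partial_t^\alpha v + A v - q v = h\, u(q),\qquad v(0) = 0,\qquad v|_{\partial\Omega}=0,
\end{equation*}
obtained by differentiating \eqref{eqn:fde-op} with respect to $q$. Equivalently, by the Duhamel framework of Section \ref{sec:cauchy}, $v = (I-L(q))^{-1}\!\int_0^{\cdot} E(\cdot-s)\, h u(q)(s)\,ds$.

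Next, I would extract pointwise positivity of $v(T)$. Under the hypotheses $u_0 \ge 0$, $u_0 \not\equiv 0$ and $q \in S$, the maximum principle for subdiffusion (cf.\ Lemma \ref{lem:bdd-U}(ii)) yields $u(q)(\cdot,t) \ge 0$ for all $t \in [0,T]$. Since the source $hu(q) \ge 0$, the positivity preservation of the Mittag-Leffler operator $E(t)$ (inherited from the complete monotonicity of $E_{\alpha,\alpha}(-\lambda_j t^\alpha)$), together with the Neumann expansion of $(I - L(q))^{-1}$ (which preserves nonnegativity since $L(q)$ maps nonnegative functions to nonnegative functions when $q \ge 0$), give $v(T) \ge 0$. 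Positivity of the Dirichlet Green's function of $-\Delta$ then yields $A^{-1} v(T) \ge 0$, and the strong maximum principle upgrades this to strict positivity in $\Omega$ whenever $h \not\equiv 0$.

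Then I would quantify contraction via the expansion
\begin{equation*}
\|F'(q) h\|_{L^2}^2 = \|h\|_{L^2}^2 + 2\lambda (h, A^{-1} v(T)) + \lambda^2 \|A^{-1} v(T)\|_{L^2}^2.
\end{equation*}
The smoothing estimate from Lemma \ref{lem:solop}, together with an $L^\infty$-bound on $u(q)$, gives $\|A^{-1} v(T)\|_{L^2} \le C(q,u_0,T) \|h\|_{L^2}$, controlling the quadratic-in-$\lambda$ term. Combining with strict positivity of the cross-term $(h, A^{-1} v(T))$ (and the appropriate sign convention in the iteration so that this cross-term contributes in the contracting direction), one may choose $\lambda$ small enough that the cross-term dominates the quadratic term, yielding the strict inequality $\|F'(q) h\|_{L^2} < \|h\|_{L^2}$.

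The main obstacle is the positivity analysis in the second step: establishing $v(T) \ge 0$ rigorously in the presence of the zeroth-order term $-qv$ in the linearized equation requires either a careful maximum principle for subdiffusion or the Duhamel/Neumann-series argument described above, both of which rely essentially on the complete monotonicity of the Mittag-Leffler functions—a property specific to the subdiffusion kernel. Ensuring a uniform choice of $\lambda$ over $h \in S$ further requires a lower bound on the Rayleigh-type quotient $(h, A^{-1} v(T))/\|A^{-1} v(T)\|_{L^2}^2$, which follows from the smoothing estimate combined with strict positivity.
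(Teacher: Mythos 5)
Your overall strategy coincides with the paper's: compute the G\^ateaux derivative, identify the linearized solution $v$, use the maximum principle for subdiffusion to get $v(T)\geq 0$ (and $\not\equiv 0$ when $h\not\equiv0$), use positivity of the Dirichlet Green's function to get $A^{-1}v(T)>0$ in $\Omega$, and then take $\lambda$ small. Your explicit expansion of $\|F'(q)h\|_{L^2}^2$ is in fact a welcome sharpening of the paper's closing sentence, which merely asserts that a sufficiently small $\lambda$ (depending on $h$) works.

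However, there is one genuine gap, and it sits exactly at the point you wave at with ``the appropriate sign convention'': with the signs as you have derived them, the argument proves the \emph{opposite} inequality. Differentiating \eqref{eqn:fp} for the model $\partial_t^\alpha u=\Delta u+qu$ gives $F'(q)h=h+\lambda A^{-1}v(T)$ with $v$ solving $\partial_t^\alpha v=\Delta v+qv+hu(q)$, $v(0)=0$; for $h\in S$ and $u(q)>0$ the source is nonnegative, so $v(T)\geq0$ and $A^{-1}v(T)\geq 0$, whence the cross term $2\lambda\lk h,A^{-1}v(T)\rk$ is \emph{positive} and $\|F'(q)h\|_{L^2}\geq\|h\|_{L^2}$ for every $\lambda>0$. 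The paper's proof avoids this by writing $F'(q)[h]=h-\lambda A^{-1}v(T)$ with $v$ solving the linearized equation with $-qv$ (i.e.\ effectively adopting the absorption convention $\Delta u-qu$, under which the forward map is monotone decreasing in $q$), so that the cross term enters with a negative sign and the strict inequality follows for small $\lambda$. You must either adopt and justify that convention, or show that the relevant quantity $u'(q)[h](T)$ is nonpositive; as written, your positivity analysis and your contraction claim are incompatible. Two smaller remarks: positivity preservation of $E(t)$ does not follow merely from complete monotonicity of the scalar coefficients $E_{\alpha,\alpha}(-\lambda_jt^\alpha)$ (a spectral multiplier with positive coefficients need not preserve positivity); the paper instead invokes the maximum principle of Luchko--Yamamoto directly. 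And the uniform-in-$h$ choice of $\lambda$ you aim for does not follow from strict interior positivity of $A^{-1}v(T)$ (which vanishes on $\partial\Omega$); the paper only proves the weaker statement with $\lambda$ depending on $h$, and you should settle for the same.
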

\begin{proof}
For any $q,h\in S$, the G\^{a}teaux derivative $F$ is given by
\begin{equation*}
  F'(q)[h] = h -\lambda A^{-1} v(T),
\end{equation*}
where $v\equiv v(q,h)$ satisfies the following inhomogeneous problem
\begin{align*}
 \left\{\begin{aligned}
    \partial_t^\alpha v & = \Delta v -qv + hu(q),\quad \mbox{in } \Omega\times(0,T],\\
     v & = 0,\quad \mbox{on }\partial\Omega\times(0,T],\\
     v(0)& = 0,\quad\mbox{in }\Omega.
 \end{aligned}\right.
\end{align*}
By the ``strong'' maximum principle for the subdiffusion model \cite{LuchkoYamamoto:2017} and the nonnegativity of $u_0$ and $q$ that
\begin{equation*}
  0<u(q)(x,t)\leq \|u_0\|_{L^\infty(\Omega)}\quad \forall (x,t)\in \Omega\times(0,T].
\end{equation*}
Since $h\in S$, the maximum principle \cite{LuchkoYamamoto:2017} shows $ v(x,t)\geq$ in $\Omega\times[0,T]$.
Further, with $f(t)=hu(t)$, the solution $v$ can be represented by
\begin{equation*}
  v(t) = \int_0^tE(t-s)f(s)\d s = \sum_{j=1}^\infty \int_0^t(t-s)^{\alpha-1}E_{\alpha,\alpha}(-\lambda_j(t-s)^\alpha)(f(s),\varphi_j)\d s\varphi_j.
\end{equation*}
In particular,
\begin{equation*}
  (v(T),\varphi_1) = \int_0^t(t-s)^{\alpha-1}E_{\alpha,\alpha}(-\lambda_1(t-s)^\alpha)(f(s),\varphi_1)\d s.
\end{equation*}
Now if $h\not\equiv0$, then for any fixed $t>0$, $\mathrm{supp}(f(t))
=\mathrm{supp}(h)$, and by the positivity of $\varphi_1$ in $\Omega$, $(f(t),\varphi_1)>0$
for any $t>0$. Thus, $0\leq v(T)\not\equiv0$, and by the properties of elliptic problems,
$A^{-1}v(T)>0$ in $\Omega$. Thus, by choosing $\lambda$ sufficiently small (depending on $h$), we
deduce the desired assertion.
\end{proof}

The next result shows that the fixed point iteration \eqref{eqn:fp} can actually also be interpreted as a preconditioned
gradient descent method, under certain restrictions on $u_0$ and the residual $u(q)(T)-g$. The
descent property can be numerically observed in a more general case, which, however, remains to be proved.
\begin{proposition}\label{prop:descent}
If $u_0\not\equiv0$ is nonnegative and $u(q)(T)-g\not\equiv0$ is not sign changing, then $A^{-1}(u(q)(T)-g)$ is a descent
direction to the functional $J(q)=\tfrac12\|u(q)(T)-g\|_{L^2(\Omega)}^2$.
\end{proposition}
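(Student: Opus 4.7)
The plan is to compute the directional derivative $J'(q)[d]$ with $d=A^{-1}(u(q)(T)-g)$ and to show, via two applications of the maximum principle, that this derivative has a definite nonzero sign, which yields the descent property. Using the Gateaux differentiability of $q\mapsto u(q)(T)$ established in the proof of the preceding proposition, the chain rule gives
\[
 J'(q)[h] = (u(q)(T)-g,\,v_h(T))_{L^2(\Omega)},
\]
where $v_h=\partial_\epsilon u(q+\epsilon h)|_{\epsilon=0}$ is the sensitivity, solving the linearized inhomogeneous subdiffusion problem
\[
 \partial_t^\alpha v_h = \Delta v_h + q\,v_h + h\,u(q),\qquad v_h(0)=0,\qquad v_h|_{\partial\Omega}=0.
\]

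Next I would exploit the two positivity hypotheses. Since $u_0\ge 0$ is not identically zero, the strong maximum principle for subdiffusion \cite{LuchkoYamamoto:2017} yields $u(q)(x,t)>0$ on $\Omega\times(0,T]$. Moreover, $u(q)(T)-g$ is not sign changing and not identically zero, and the operator $A^{-1}$ preserves sign (its Dirichlet Green's function is strictly positive), so $d=A^{-1}(u(q)(T)-g)$ is of one definite sign on $\Omega$ and not identically zero. Hence the right-hand side $d\,u(q)$ of the sensitivity equation with $h=d$ has the same definite sign throughout $\Omega\times(0,T]$, and applying the subdiffusion maximum principle to $v_d$ (with zero initial and boundary data) forces $v_d$ to inherit this sign on $\Omega\times(0,T]$.

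Combining these observations, the pointwise product $(u(q)(T)-g)(x)\cdot v_d(x,T)$ has one definite sign and is nontrivial on the support of $u(q)(T)-g$, so that $J'(q)[d]$ is nonzero with the sign required to make $d$ a descent direction for $J$ at $q$ (after the appropriate choice of the sign of the relaxation parameter $\lambda$). I expect the main obstacle to be obtaining \emph{strict}, rather than merely nonstrict, sign dominance of the integrand on a set of positive measure; this forces use of the \emph{strong} maximum principle of \cite{LuchkoYamamoto:2017} to ensure $v_d(T)\not\equiv 0$ on $\{u(q)(T)\neq g\}$, and not just its weak version. A secondary care point is that $q\in L^2(\Omega)$ is unbounded in general, so one must verify that the quoted strong maximum principle continues to apply at this regularity of the potential coefficient.
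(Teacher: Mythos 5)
Your argument is the primal (sensitivity-based) counterpart of the paper's adjoint-based proof, and the two are equivalent through the standard duality pairing: the paper introduces the backward Riemann--Liouville adjoint state $w$ with terminal data $u(q)(T)-g$, writes $J'(q)=-\int_0^T u(q)\,w(q)\,\d t$, and shows the space--time integrand $u(q)\,w(q)\,A^{-1}(u(q)(T)-g)$ is signed, whereas you pair $u(q)(T)-g$ directly with the sensitivity $v_d(T)$; the identity $(u(q)(T)-g,v_h(T))=\pm\int_0^T(h\,u(q),w)\,\d t$ shows these are the same quantity. Both routes rest on the same three ingredients you list: positivity of $u(q)$, sign-preservation of $A^{-1}$, and the maximum principle for the linearized/adjoint subdiffusion problem with a signed source (the paper handles the Dirac terminal source in the adjoint problem by the density argument you would need for the weak maximum principle). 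One small advantage of the adjoint form is that strictness is cheaper there: the space--time integral is nonzero as soon as $w\not\equiv0$, since $A^{-1}(u(q)(T)-g)$ is strictly positive (or negative) throughout $\Omega$ and $u(q)>0$ a.e.; your primal form genuinely needs the strong maximum principle to guarantee $v_d(T)\not\equiv0$ on $\supp(u(q)(T)-g)$, exactly as you anticipate. Your caveat about the validity of the maximum principle for merely $L^2$ potentials is legitimate and is not addressed by the paper either.

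The one genuine gap is the sign, and you cannot defer it to ``the appropriate choice of the sign of the relaxation parameter'': the proposition asserts that the fixed direction $d=A^{-1}(u(q)(T)-g)$ itself is a descent direction, and $\lambda>0$ in \eqref{eqn:fp}. Carry your own computation through with the linearization of \eqref{eqn:fde} as you wrote it (source $+h\,u(q)$, consistent with the model $\partial_t^\alpha u=\Delta u+qu$): if $u(q)(T)-g\geq0$ then $d\geq0$, the source $d\,u(q)\geq0$, hence $v_d\geq0$ and $J'(q)[d]=(u(q)(T)-g,v_d(T))\geq0$; the case $u(q)(T)-g\leq0$ gives the same sign. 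So your chain of positivity arguments proves that $d$ is an \emph{ascent} direction for the model as stated in \eqref{eqn:fde}, the opposite of the claim. The paper obtains descent because its sensitivity and adjoint equations are written with $\Delta-q$ rather than $\Delta+q$ (i.e., it implicitly adopts the convention $\partial_t^\alpha u=\Delta u-qu$, under which the source in the linearized equation is $-h\,u(q)$ and the gradient acquires the minus sign). You have in effect uncovered a real sign inconsistency in the paper, but a complete proof must commit to one convention and track the sign to a definite conclusion rather than leaving it to be absorbed elsewhere.
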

\begin{proof}
Let $w\equiv w(q)$ solve the adjoint problem
\begin{equation*}
 \left\{\begin{aligned}
    _{t}^{ \kern -.1em R} \kern -.2em \partial^{\alpha}_{\kern -.1em T} w & = \Delta w -qw,\quad \mbox{in } \Omega\times(0,T],\\
     w & = 0,\quad \mbox{on }\partial\Omega\times(0,T],\\
     _{t}^{ \kern -.1em R} \kern -.2em \partial^{\alpha-1}_{\kern -.1em T}w(T)& = u(q)(T)-g,\quad\mbox{in }\Omega,
 \end{aligned}\right.
\end{equation*}
where the notation $_{t}^{ \kern -.1em R} \kern -.2em \partial^{\alpha}_{\kern -.1em T} w$ denotes the right-sided
Riemann-Liouville fractional derivative (based at $T$), and $_{t}^{ \kern -.1em R} \kern -.2em \partial^{\alpha-1}_{\kern -.1em T}w(T) $
the Riemann-Liouville fractional integral of order $1-\alpha$. Further, using the solution operator $E_q$ associated with the
$\Delta -q$, $w$ can be represented by
\begin{equation}\label{eqn:solrep-w}
  w(t) = E_q(T-t)(u(q)(T)-g) = \int_t^T E_q(s-t)(u(q)(T)-g)\delta_T(s)\d s,
\end{equation}
where $\delta_T(s)$ denotes the Dirac delta function at $T$.
Then direct computation shows that the gradient $J'(q)$ to the functional $J(q)$ is given by
\begin{equation*}
  J'(q) = - \int_0^Tu(q) w(q)\d t.
\end{equation*}
Now it follows that
\begin{align*}
  \int_\Omega J'(q) A^{-1}(u(q)(T)-g)\d x & = - \int_0^T\int_\Omega u(q)w(q)A^{-1}(u(q)(T)-g)\d x\d t .
\end{align*}
By the maximum principle for elliptic problems, $u(q)(T)-g\lessgtr0$ in $\Omega$ implies $A^{-1}(u(q)(T)-g)
\lessgtr0$ in $\Omega$, and similarly, $u(q)$ is positive almost everywhere in $\Omega\times (0,T)$ for
nonnegative $u_0\not\equiv0$ \cite{LuchkoYamamoto:2017}. Meanwhile, in view of the representation \eqref{eqn:solrep-w}, using
a density argument (i.e., approximating the singular source $(u(q)(T)-g)\delta_T(s)$ with $(u(q)(T)-g)\phi_n(t)$,
with $\phi_n\geq 0$ being smooth and $\phi_n(t)\to \delta_T(t)$ weakly; see \cite{LuchkoYamamoto:2017} for relevant
argument) and the weak maximum principle for subdiffusion, $u(q)(T)-g\lessgtr 0$ implies
$w(t)\lessgtr 0 $ almost everywhere in $\Omega\times(0,T)$. Consequently, we arrive at
\begin{equation*}
  \int_\Omega J'(q) A^{-1}(u(q)(T)-g)\d x<0,
\end{equation*}
i.e., $A^{-1}(u(q)(T)-g)$ is a descent direction to the functional $J(q)$.
\end{proof}

\begin{algorithm}[hbt!]
  \center
  \caption{Anderson acceleration for the fixed point iteration \eqref{eqn:fp}.\label{alg:aa}}
  \begin{algorithmic}[1]
    \STATE Give the initial guess $q^0$, memory parameter $m\geq1$, and the maximum iteration number $K$.
    \FOR {$k=0,1,\ldots,K$}
    \STATE Set $m_k=\min(m,k)$.
    \STATE Compute $R_k=[r_{k-m_k},\ldots r_{k}]$, with $r_i=F(q^i)-q^i$.
    \STATE Find $\beta^{(k)}\in \mathbb{R}^{m_k+1}$ by
      \begin{equation*}
        \beta^{(k)}\in \arg\min_{\beta\in \mathbb{R}^{m_k+1},\sum_{i=0}^{m_k}\beta_k=1}\|R_k\beta\|.
      \end{equation*}
    \STATE Set $q^{k+1}=\sum_{i=0}^{m_k}\beta_i^{(k)}F(q^i)$.
    \STATE Check the stopping criterion.
    \ENDFOR
  \end{algorithmic}
\end{algorithm}

Numerical experiments indicate that the convergence behavior of fixed point iteration \eqref{eqn:fp} depends
very much on the relaxation parameter $\lambda>0$: if $\lambda$ is small, then it converges steadily but only
slowly, whereas for large $\lambda$, the convergence may be unstable and suffers from large oscillations. In
order to accelerate the convergence, we employ the classical Andersson acceleration technique \cite{Anderson:1965},
which can be viewed as a version of GMRES for nonlinear problems \cite{WalkerNi:2011}; see the review
\cite{BrezinskiSaad:2018} for other related extrapolation techniques. The complete procedure for Anderson
acceleration is listed in Algorithm \ref{alg:aa}. The integer $m$ controls the number of memory terms used
for the Anderson update. Thus, the acceleration step only involves simple algebraic manipulations, and the
associated computational overhead is negligible. In our experiment below, $m=2$ represents a good choice. At
line 7 of the algorithm, the stopping criterion of the iteration can employ the standard discrepancy principle, i.e.,
\begin{equation}\label{eqn:dp}
  k^* = \min_{k \geq 1}\{ \|u(q^k)-g\|_{L^2(\Omega)} \leq \tau\delta\},
\end{equation}
where $\tau>1$ is the tolerance, and $\delta=\|g-u(q^\dag)\|_{L^2(\Omega)}$ is the noise level.
The discrepancy principle is a well established early stopping strategy for iterative regularization methods
\cite{EnglHankeNeubauer:1996}. The fixed point algorithm and its accelerated variant exhibit a very
similar behavior in practice, when noise is present in the data; see Section \ref{sec:numer} for numerical illustrations.

Despite the enormous empirical success, the global convergence of Anderson acceleration remains completely open, even for affine
linear maps with fixed memory (the case of linear map with full memory is well known due to its connection with
GMRES \cite{WalkerNi:2011}). The local convergence of Anderson acceleration for contractive maps was studied recently in
\cite{TothKelley:2015,EvansPollock:2020}. However, these results do not apply to the inverse potential
problem, since the associated map is not a contraction.

\begin{remark}
In the fixed point iteration \eqref{eqn:fp}, the update does not change the boundary condition
of the initial guess $q^0$. Thus, it is implicitly assumed that the boundary condition is
exactly known. Further, for $g\in L^2(\Omega)$, by the standard elliptic regularity result,
the update increment $A^{-1}(u(q)(T)-g)$ belongs to $H^2(\Omega)$, and thus the regularity of
the initial guess $q^0$ essentially determines the regularity of the iterates, and the
algorithm is most suitable for recovering a smooth potential.
\end{remark}

\begin{remark}
There are alternative choices of fixed point algorithms. One popular choice
is due to Isakov \cite{Isakov:1991}: given the initial guess $q^0$, it reads
\begin{equation*}
  q^{k+1} = \frac{\partial_t^\alpha u(q^k)(T)-\Delta u(q^k)(T)}{g}.
\end{equation*}
The convergence of the algorithm in the time-fractional case has been analyzed
in \cite{ZhangZhou:2017}, provided that the terminal time $T$ is sufficiently large.
Anderson acceleration might also be used to accelerate this algorithm.
\end{remark}

\section{Numerical reconstructions and discussions}\label{sec:numer}

Now we illustrate the accuracy and efficiency of the fixed point algorithm
\eqref{eqn:fp} with one- and two-dimensional numerical examples. The direct problem is solved by a fully discrete
scheme based on the Galerkin finite element method in space and backward Euler convolution quadrature
in time, which is first-order accurate in time and second-order accurate in space \cite{JinLazarovZhou:2016};
(see \cite{JinLazarovZhou:2019} for an overview of existing schemes). The noisy data $g$ is generated by
\begin{equation*}
  g(x) = u(q^\dag)(x,T) + \epsilon\sup_{x\in\Omega}|u(q^\dag)(x,T)|\xi(x),\quad x\in \Omega,
\end{equation*}
where the noise $\xi(x)$ follows the standard Gaussian distribution, and $\epsilon\geq 0$ denotes the (relative) noise level.
The exact data $u(q^\dag)(x,T)$ is generated using a finer spatial-temporal mesh in order to avoid the inverse crime. In Anderson acceleration, the memory parameter
$m$ is fixed at 2, and the relaxation parameter $\lambda$ is fixed at 1000 and 100 for one- and two-dimensional problems,
respectively. Note that this choice of $\lambda$ is not optimized, since the optimal choice depends strongly on the problem data, e.g., $T$ and $u_0$. Nonetheless,
the numerical experiments below indicate that Anderson acceleration is fairly robust with respect to $\lambda$, and works for a broad range
of $\lambda$ values. Throughout, the parameter $\tau$ in the discrepancy principle \eqref{eqn:dp} is fixed at $\tau=1.01$.
Below, For a given reconstruction $q^*$, we compute two metrics, the $L^2(\Omega)$-error $e_q$ and the residual
$r_q$, defined, respectively, by
\begin{equation*}
  e_q = \|q^\dag - q^*\|_{L^2(\Omega)} \quad \mbox{and}\quad r_q=\|u(q^*)-g\|_{L^2(\Omega)},
\end{equation*}
where $q^\dag$ denotes the exact potential. Unless otherwise specified, the results presented below are obtained by the fixed point
algorithm \eqref{eqn:fp} with Anderson acceleration, with a zero initial guess.

\subsection{Results for the one-dimensional case}
First we present two one-dimensional examples on the unit interval $\Omega=(0,1)$. In the computation, the domain
$\Omega$ is divided into $M$ equal subintervals, and the time interval $(0,T)$ is divided into $N$ subintervals. To
generate data, we take $M=1000$ and $N=1000$, whereas for the inversion, $M=200$ and $N=500$. The fixed point iteration
\eqref{eqn:fp} is run for at most 1000 iterations.

The first example is to recover a smooth potential.
\begin{example}\label{exam:1d1}
$u_0=\sin\pi x + \frac{1}{100}x(1-x)$ and $q^\dag(x) = e^x\sin(2\pi x)$.
\end{example}

Note that the initial condition $u_0$ is chosen to fulfill the conditions in Theorem \ref{thm:inverse}.
The numerical results for Example \ref{exam:1d1} are shown in Tables \ref{tab:exam1T1aa}--\ref{tab:exam1T001aa},
with three different final times,  $T=0.01$, $T=0.1$ and $T=1$, which also include the results for normal
diffusion (i.e., $\alpha=1.00$). In the tables, the numbers refer to the
reconstruction error $e_q$, and the numbers in the brackets denote the stopping index determined by the
discrepancy principle \eqref{eqn:dp}. It is observed that the error $e_q$ decreases
steadily as the noise level $\epsilon$ tends to zero for all three fractional orders $\alpha$ and final time $T$.
For each fixed $T$ and $\epsilon$, the accuracy does not change much with respect to $\alpha$, and thus the
fractional order $\alpha$ does not influence much the behavior of the reconstruction error. Nonetheless, for any fixed $\alpha$,
when the data is noise free, the error $e_q$ increases with the time $T$, although only very slightly. These
observations are consistent with the local Lipschitz stability in Theorem \ref{thm:inverse} (and the stability
for the parabolic case \cite{ChoulliYamamoto:1997}), which holds for all $\alpha\in (0,1]$,
so long as the terminal time $T$ is sufficiently small. The numerical experiments actually indicate that even for much
large $T$, the inverse problem exhibits nearly identical behavior in terms of the reconstruction error $e_q$,
indicating similar degree of ill-posedness. See Fig. \ref{fig:exam1:recon} for exemplary reconstructions for
Example \ref{exam:1d1} with $T=1$ at two noise levels. The reconstructions are
largely comparable with each other for different fractional orders, corroborating Table \ref{tab:exam1T1aa}.
However, the last observation for large $T$ seems no longer valid for
normal diffusion (i.e., $\alpha=1$), for which the numerical reconstruction becomes much more challenging;
the fixed point algorithm does not work as well as in the fractional case: it takes many more iterations to reach
the discrepancy principle, and yet the reconstruction is generally inferior at all noise levels. This agree
also with the empirical observations in the last column of Fig. 1 of \cite{KaltenbacherRundell:2019}.

\begin{table}[hbt!]
  \centering
  \caption{The reconstruction error $e_q$ for Example \ref{exam:1d1} with $T=0.01$.\label{tab:exam1T001aa}}
  \begin{tabular}{cccccc}
  \toprule
  $\alpha\backslash \epsilon$ & 0 & 1e-3 & 5e-3 & 1e-2 & 5e-2 \\
  \midrule
  0.25 &  2.21e-3 (1000) &  4.77e-2 ( 8) &  1.36e-1 (7) &  2.04e-1 (4) &  3.91e-1 (3)\\
  0.50 &  2.71e-3 (1000) &  4.43e-2 (15) &  1.41e-1 (7) &  1.99e-1 (4) &  4.06e-1 (3)\\
  0.75 &  2.01e-3 (1000) &  4.91e-2 ( 7) &  9.06e-2 (6) &  2.02e-1 (3) &  1.04e0  (1)\\
  1.00 &  3.36e-3 (1000) &  7.85e-2 ( 6) &  2.12e-1 (3) &  2.83e-1 (3) &  1.00e0  (1)\\
  \bottomrule
  \end{tabular}
\end{table}

\begin{table}[hbt!]
  \centering
  \caption{The reconstruction error $e_q$ for Example \ref{exam:1d1} with $T=0.1$.\label{tab:exam1T01aa}}
  \begin{tabular}{cccccc}
  \toprule
  $\alpha\backslash \epsilon$ & 0 & 1e-3 & 5e-3 & 1e-2 & 5e-2 \\
  \midrule
  0.25 & 2.89e-3 (1000) &  4.69e-2 (9) &   8.06e-2 (9) &  2.04e-1 (4) &  3.88e-1 (3)\\
  0.50 & 3.65e-3 (1000) &  4.96e-2 (8) &   8.10e-2 (7) &  2.04e-1 (4) &  3.78e-1 (3)\\
  0.75 & 5.44e-3 (1000) &  4.69e-2 (9) &   7.95e-2 (8) &  2.03e-1 (4) &  3.50e-1 (3)\\
  1.00 & 4.99e-3 (1000) &  4.59e-2 (9) &   7.08e-2 (8) &  2.01e-1 (4) &  3.19e-1 (3)\\
  \bottomrule
  \end{tabular}
\end{table}

\begin{table}[hbt!]
  \centering
  \caption{The reconstruction error $e_q$ for Example \ref{exam:1d1} with $T=1$.\label{tab:exam1T1aa}}
  \begin{tabular}{cccccc}
  \toprule
  $\alpha\backslash \epsilon$ & 0 & 1e-3 & 5e-3 & 1e-2 & 5e-2 \\
  \midrule
  0.25 & 3.69e-3 (1000) & 4.56e-2 (16)   & 9.55e-2 (10) & 2.03e-1 (4) & 3.99e-1 (3)\\
  0.50 & 5.14e-3 (1000) & 4.74e-2 (11)   & 1.32e-1 ( 9) & 2.02e-1 (4) & 4.10e-1 (3)\\
  0.75 & 1.19e-2 (1000) & 4.93e-2 (12)   & 7.85e-2 (10) & 1.96e-1 (4) & 3.54e-1 (4)\\
  1.00 & 2.12e-1 (1000) & 2.12e-1 (1000) & 2.13e-1 (1000)&  2.17e-1(46) & 3.14e-1 (43)\\
   \bottomrule
  \end{tabular}
\end{table}

\begin{figure}[hbt!]
  \centering
  \begin{tabular}{cc}
  \includegraphics[width=0.45\textwidth,trim={1.5cm 0 1cm 0.0cm}]{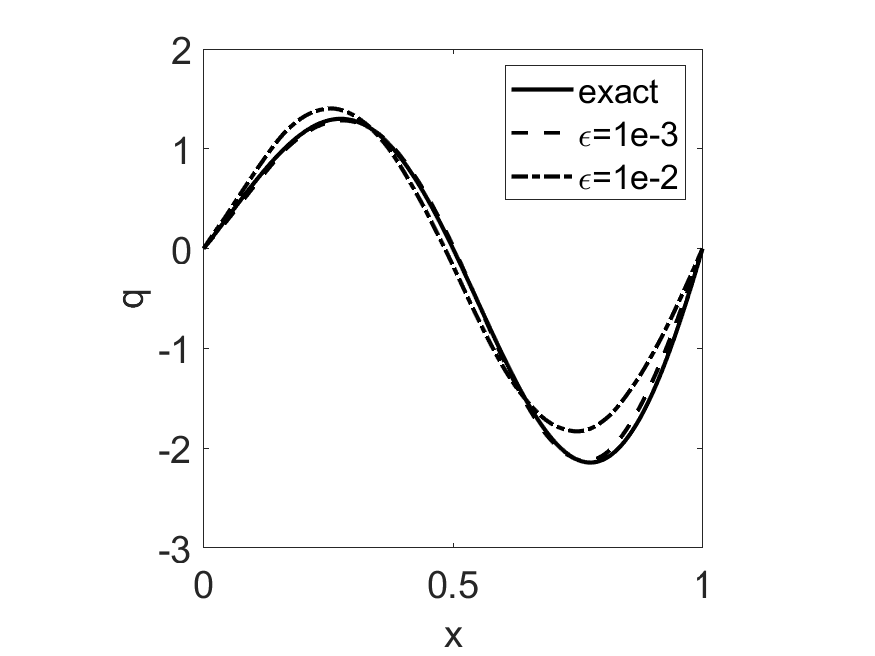} & \includegraphics[width=0.45\textwidth,trim={1.5cm 0 1cm 0.0cm}]{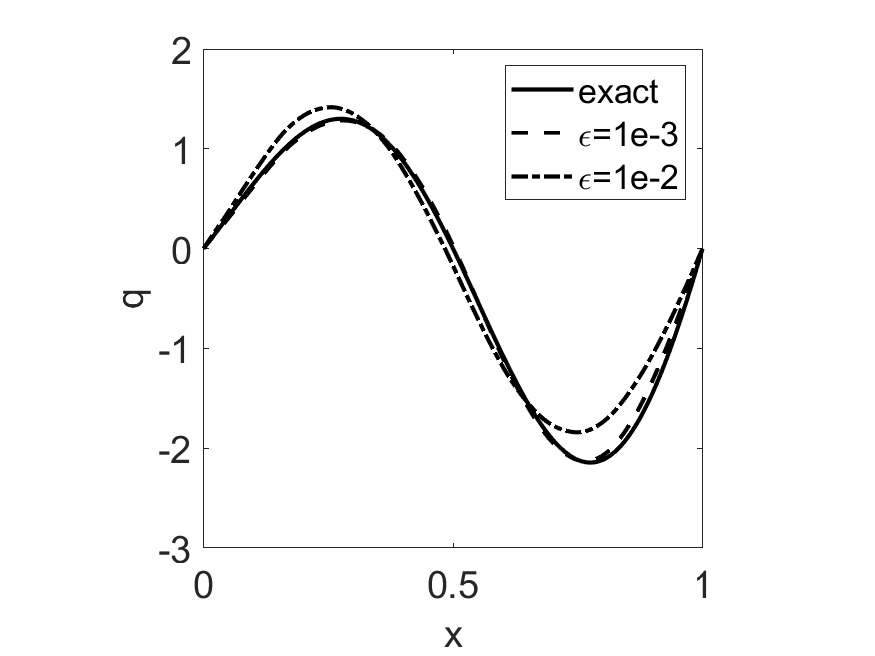}\\
  (a) $\alpha=0.25$ & (b) $\alpha=0.50$\\
  \includegraphics[width=0.45\textwidth,trim={1.5cm 0 1cm 0.0cm}]{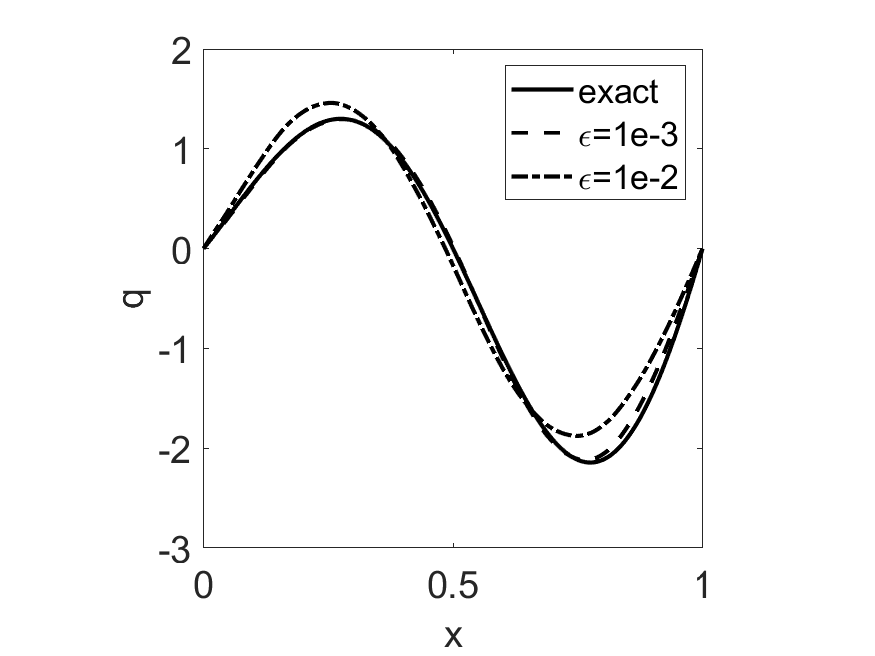} & \includegraphics[width=0.45\textwidth,trim={1.5cm 0 1cm 0.0cm}]{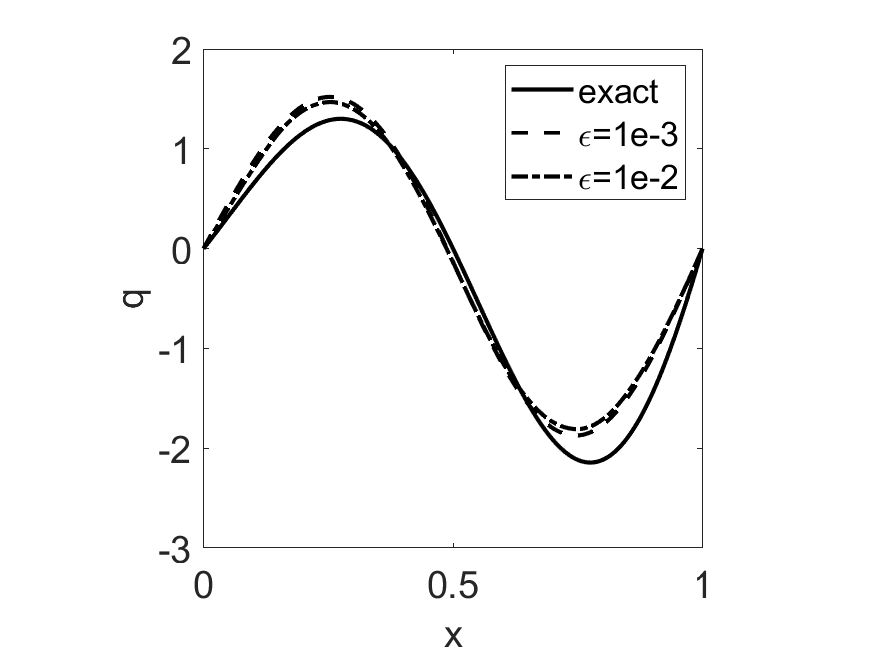}\\
  (c) $\alpha=0.75$ & (d) $\alpha = 1.00$
  \end{tabular}
  \caption{Numerical reconstructions for Example \ref{exam:1d1} at $T=1$ with different $\alpha$ values.\label{fig:exam1:recon}}
\end{figure}

Tables \ref{tab:exam1T001aa}--\ref{tab:exam1T1aa} indicate that with Anderson acceleration and discrepancy principle,
the fixed point algorithm is generally terminated after about 10 iterations for low noise level, and 5 iterations for
high noise levels. In contrast, the fixed point algorithm \eqref{eqn:fp} takes far more iterations, by a factor of 10;
see Table \ref{tab:exam1} for related results for Example \ref{exam:1d1} with $T=1$. Nonetheless, with or without
acceleration, the obtained reconstruction errors are largely comparable with each other, except the case $\epsilon=0$,
for which the iteration \eqref{eqn:fp} requires far more than 1000 iterations in order to achieve comparable accuracy
with that in Table \ref{tab:exam1T1aa}. Thus, Anderson acceleration is very effective in speeding up the convergence,
while maintaining comparable accuracy. It is worth noting that for $T=1$, the results for normal diffusion are inferior
for $T=1$, as manifested by the fact that the convergence of the fixed point algorithm suffers seriously and the least-squares
problem in Anderson acceleration exhibits pronounced ill-conditioing, which necessitates proper regularization (done via SVD here). Also the
accelerating effect of Anderson acceleration is less dramatic, although it does converge after more iterations, when
compared with that for smaller $T$ or small $\alpha$. These observations seem to indicate the dramatic difference in the behavior
of the inverse potential problem for subdiffusion and normal diffusion at large time $T$, and the fractional case is far more amenable with
numerical reconstruction.

\begin{table}[hbt!]
  \centering
  \caption{The reconstruction error $e_q$ for Example \ref{exam:1d1} with $T=1$, without Anderson acceleration.\label{tab:exam1}}
  \begin{tabular}{cccccc}
  \toprule
  $\alpha\backslash \epsilon$ & 0 & 1e-3 & 5e-3 & 1e-2 & 5e-2 \\
  \midrule
  0.25 & 2.49e-2 (1000) &  4.93e-2 (368) &  1.43e-1 (110) &  2.19e-1 ( 75) &  8.23e-1 (18)\\
  0.50 & 3.03e-2 (1000) &  4.96e-2 (496) &  1.43e-1 (148) &  2.17e-1 (101) &  8.10e-1 (25)\\
  0.75 & 4.64e-2 (1000) &  5.02e-2 (896) &  1.42e-1 (267) &  2.15e-1 (182) &  7.97e-1 (46)\\
  1.00 & 1.18e0  (1000) &  1.18e0  (1000)&  1.18e0  (1000)&  1.19e0 (1000) &  1.19e0  (1000)\\
  \bottomrule
  \end{tabular}
\end{table}

The convergence behavior of the acceleration scheme is shown in Fig. \ref{fig:exam1:conv-aa}. Note that
the reconstruction error $e_q$ first decreases, and then starts to increase as the iteration further
proceeds. This behavior is very similar to semi-convergence typically observed for an iterative regularization method (e.g.,
Landweber iteration). The discrepancy principle \eqref{eqn:dp} can choose a suitable stopping index before the
divergence kicks in, indicated by the red circle in the plots, and the attained reconstruction error is only
slightly larger than the optimal value (along the trajectory), showing the optimality of the discrepancy principle.
Further, a few extra iterations beyond the stopping index does not greatly deteriorate the reconstruction,
i.e., the algorithm enjoys excellent numerical stability. This highly desirable property is attributed to the use of the
preconditioner $A^{-1}$ in the iteration \eqref{eqn:fp}. Surprisingly, the residual $r_q$ is monotonically
decreasing as the iteration proceeds, and eventually levels off at the noise level $\delta$. That is, the
fixed point iteration is actually a descent method for minimizing the residual $r_q$, an interesting fact that
remains to be rigorously established (see Proposition \ref{prop:descent} for a partial justification). Thus,
overall, the algorithm with discrepancy principle is an effective reconstruction method.

\begin{figure}[hbt!]
  \centering
  \begin{tabular}{cc}
    \includegraphics[width=0.45\textwidth,trim={1.5cm 0 1cm 0.5cm}]{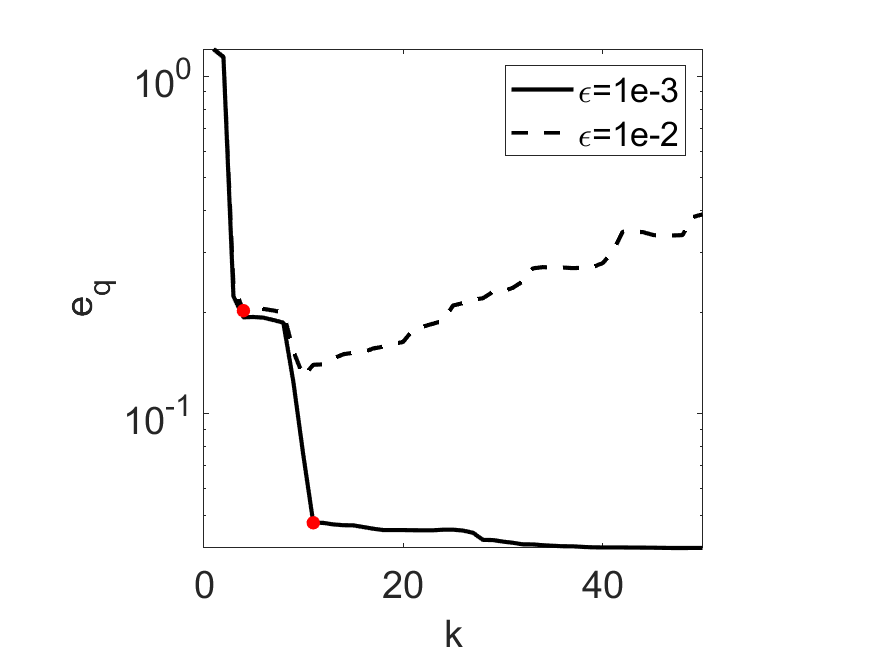} & \includegraphics[width=0.45\textwidth,trim={1.5cm 0 1cm 0.5cm}]{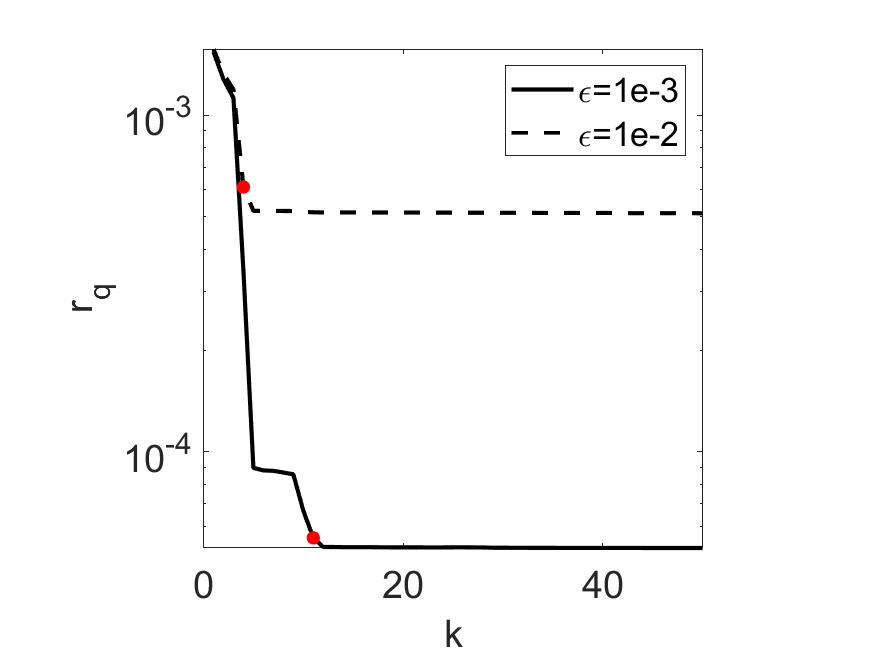}\\
    (a) error $e_q$ & (b) residual $r_q$
  \end{tabular}
  \caption{Convergence behavior of the fixed point algorithm with Anderson acceleration for Example \ref{exam:1d1} with $\alpha=0.5$ at $T=1$.
  In the plots, the red circle refers to the stopping index determined by the discrepancy principle \eqref{eqn:fp}. \label{fig:exam1:conv-aa}}
\end{figure}

The next example is about recovering a nonsmooth coefficient.
\begin{example}\label{exam:1d2}
$u_0(x)=1+\frac{3}{2}\sin2\pi x$ and $q^\dag(x) = \min(x,1-x)$.
\end{example}

Note that the given initial condition $u_0$ does not satisfy the condition of Theorem
\ref{thm:inverse}, since it does not satisfy the required regularity condition and also
changes sign in the domain, and the true potential $q^\dag$ is also less smooth. The numerical results
for Example \ref{exam:1d2} are summarized in Tables \ref{tab:exam2T001aa}--\ref{tab:exam2T1aa}.
Similar to Example \ref{exam:1d1}, it is observed that the fractional order $\alpha$ and
the terminal time $T$ does not affect much the reconstruction accuracy, indicating generic
ill-posedness of the inverse problem, irrespective of the fractional order $\alpha$.
See Fig. \ref{fig:exam2:recon} for numerical reconstructions for the case $T=1$; and like before, the results for the case
$\alpha=1.00$ are inferior to that in the fractional case. Overall, the reconstructions represent acceptable
approximations. Unsurprisingly, the approximation error is largely around the kink, where the exact
potential $q^\dag$ exhibits weak singularity. This is attributed to the smoothing effect of
the preconditioner $A^{-1}$ in the fixed point update. Thus, the iterates are overly
smooth when compared with the exact one $q^\dag$.

\begin{table}[hbt!]
  \centering
  \caption{The reconstruction error $e_q$ for Example \ref{exam:1d2} with $T=0.01$.\label{tab:exam2T001aa}}
  \begin{tabular}{cccccc}
  \toprule
  $\alpha\backslash \epsilon$ & 0 & 1e-3 & 5e-3 & 1e-2 & 5e-2 \\
  \midrule
  0.25 & 3.53e-3 (1000) &  2.68e-2 (5) &  3.70e-2 (2) &  3.95e-2 (2) &  7.59e-2 (2)\\
  0.50 & 7.48e-3 (1000) &  2.56e-2 (5) &  4.02e-2 (2) &  4.54e-2 (2) &  1.13e-1 (2)\\
  0.75 & 1.63e-2 (1000) &  2.57e-2 (4) &  3.81e-2 (2) &  4.99e-2 (2) &  1.79e-1 (2)\\
  1.00 & 1.18e-1 (1000) &  1.46e-1 (8) &  1.35e-1 (1) &  1.16e-1 (1) &  1.18e-1 (1)\\
  \bottomrule
  \end{tabular}
\end{table}

\begin{table}[hbt!]
  \centering
  \caption{The reconstruction error $e_q$ for Example \ref{exam:1d2} with $T=0.1$.\label{tab:exam2T01aa}}
  \begin{tabular}{cccccc}
  \toprule
  $\alpha\backslash \epsilon$ & 0 & 1e-3 & 5e-3 & 1e-2 & 5e-2 \\
  \midrule
  0.25 & 3.25e-3 (1000) &  1.95e-2 (6) &  3.53e-2  (2) &  3.69e-2 (2) &  1.11e-1 (1)\\
  0.50 & 7.58e-3 (1000) &  2.79e-2 (5) &  3.76e-2  (2) &  3.96e-2 (2) &  6.91e-2 (2)\\
  0.75 & 9.57e-3 (1000) &  2.83e-2 (5) &  4.45e-2  (4) &  4.55e-2 (2) &  7.91e-2 (2)\\
  1.00 & 1.94e-2 (1000) &  1.87e-2 (5) &  3.67e-2  (2) &  3.85e-2 (2) &  6.91e-2 (2)\\
  \bottomrule
  \end{tabular}
\end{table}

\begin{table}[hbt!]
  \centering
  \caption{The reconstruction error $e_q$ for Example \ref{exam:1d2} with $T=1$.\label{tab:exam2T1aa}}
  \begin{tabular}{cccccc}
  \toprule
  $\alpha\backslash \epsilon$ & 0 & 1e-3 & 5e-3 & 1e-2 & 5e-2 \\
  \midrule
  0.25 & 3.21e-3 (1000) &  1.69e-2 (6) &  3.43e-2 (2) &  3.53e-2 (2) &  5.66e-2 (1)\\
  0.50 & 6.52e-3 (1000) &  2.79e-2 (7) &  3.44e-2 (2) &  3.52e-2 (2) &  4.99e-2 (2)\\
  0.75 & 1.06e-2 (1000) &  3.04e-2 (6) &  4.36e-2 (5) &  3.50e-2 (2) &  4.57e-2 (2)\\
  1.00 & 6.02e-2 (1000) &  6.02e-2 (1000)&6.04e-2 (5) &  6.06e-2 (5) &  6.06e-2 (4)\\
  \bottomrule
  \end{tabular}
\end{table}

\begin{figure}[hbt!]
  \centering
  \begin{tabular}{cc}
  \includegraphics[width=0.45\textwidth,trim={1.5cm 0 1cm 0.0cm}]{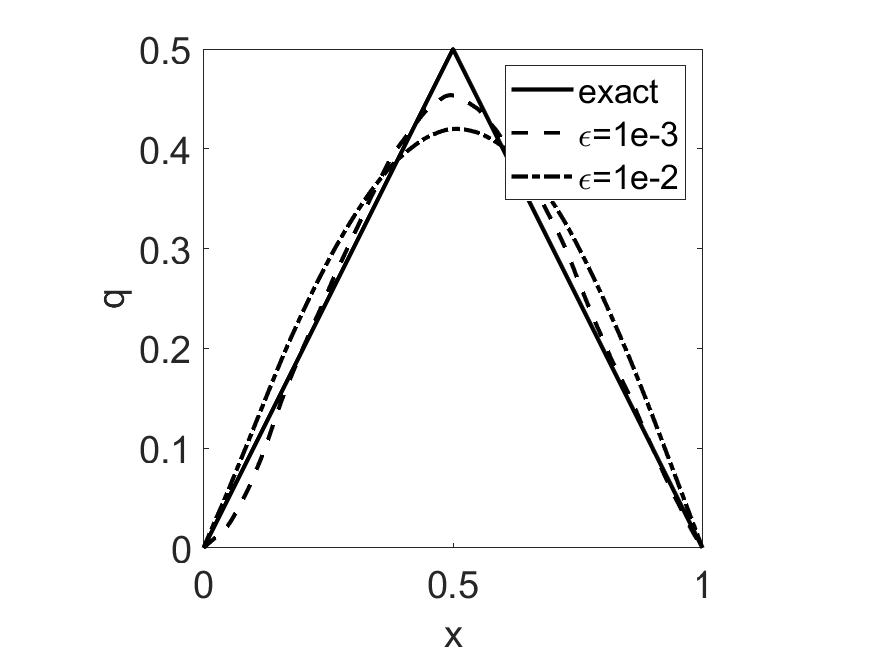} & \includegraphics[width=0.45\textwidth,trim={1.5cm 0 1cm 0.0cm}]{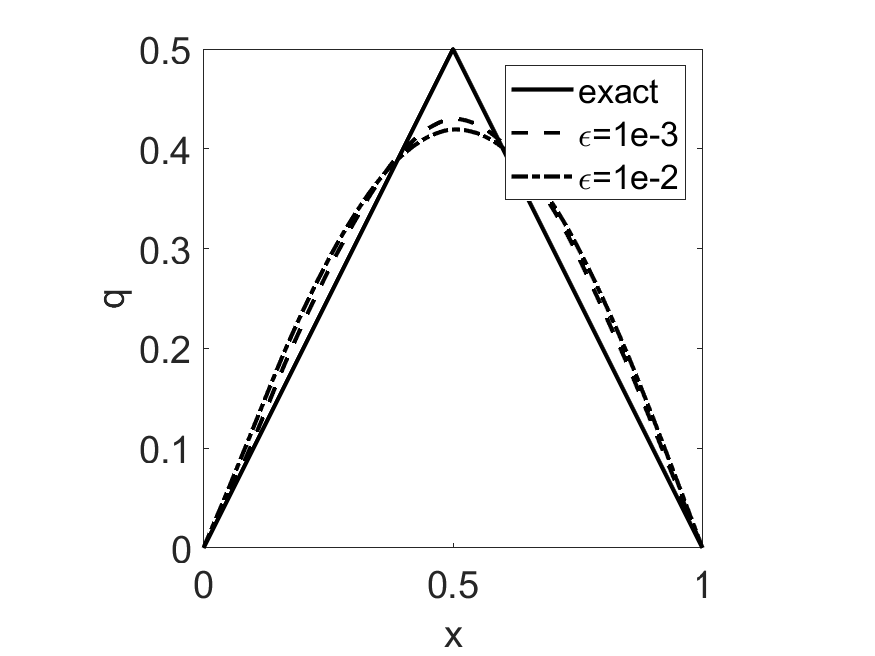}\\
  (a) $\alpha=0.25$ & (b) $\alpha=0.50$ \\
  \includegraphics[width=0.45\textwidth,trim={1.5cm 0 1cm 0.0cm}]{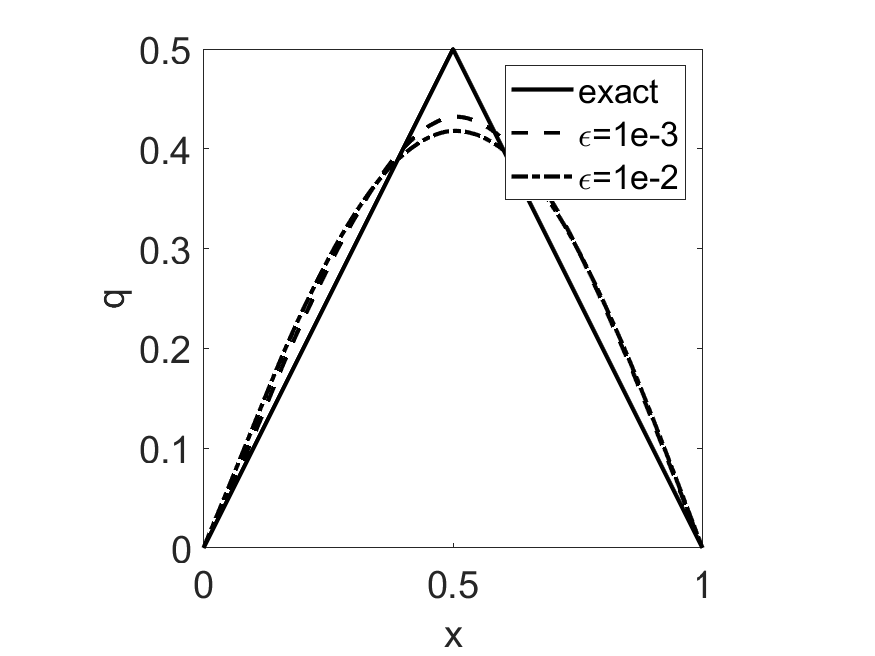} & \includegraphics[width=0.45\textwidth,trim={1.5cm 0 1cm 0.0cm}]{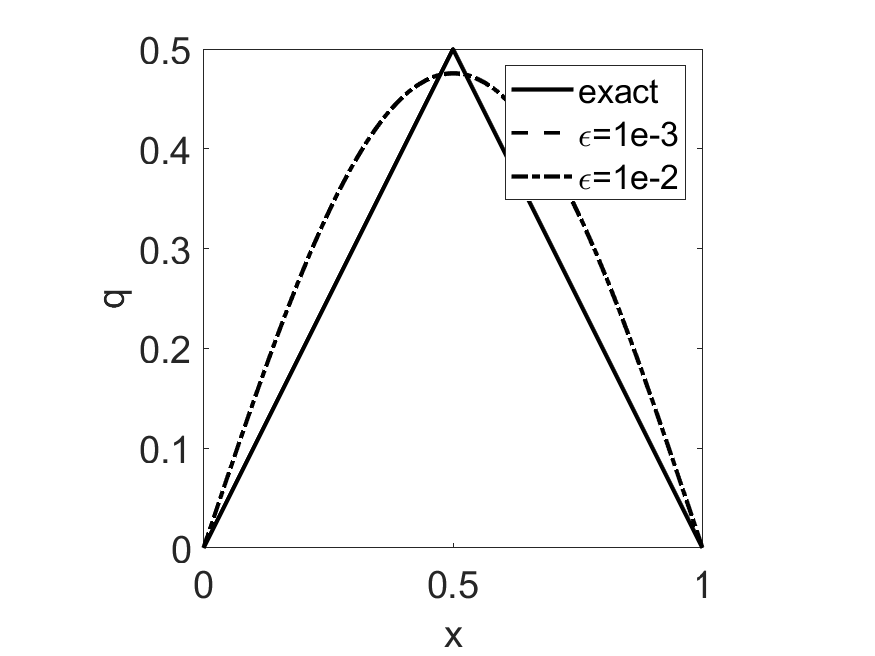}\\
  (c) $\alpha=0.75$ & (d) $\alpha=1.00$
  \end{tabular}
  \caption{Numerical reconstructions for Example \ref{exam:1d2} at $T=1$ with different $\alpha$ values.\label{fig:exam2:recon}}
\end{figure}

According to Tables \ref{tab:exam2T001aa}--\ref{tab:exam2T1aa}, the overall algorithm converges
within 5 iterations. In contrast, the convergence of the fixed point algorithm \eqref{eqn:fp}
requires many more iterations; see Table \ref{tab:exam2} for Example \ref{exam:1d2} with $T=1$.
Nonetheless, except for the case $\alpha=1.00$, the reconstruction errors are largely comparable. Thus, Anderson acceleration is
also effective in speeding up the convergence for recovering nonsmooth potentials. The plots of iterate convergence in Fig. \ref{fig:exam2:conv-aa}
again show a clear semi-convergence phenomenon. Note that overall the fixed point iterates is still
descent with respect to the residual, although there is one oscillation at the beginning. The
oscillation is related to the fact that the chosen $\lambda$ is fairly large.

\begin{table}[hbt!]
  \centering
  \caption{The reconstruction error $e_q$ for Example \ref{exam:1d2} with $T=1$, without Anderson acceleration.\label{tab:exam2}}
  \begin{tabular}{cccccc}
  \toprule
  $\alpha\backslash \epsilon$ & 0 & 1e-3 & 5e-3 & 1e-2 & 5e-2 \\
  \midrule
  0.25 & 6.85e-3 (1000) &  2.67e-2 (43) &  3.39e-2 (3) &  3.49e-2 (3) &  5.66e-2 (1)\\
  0.50 & 8.24e-3 (1000) &  2.82e-2 (51) &  3.43e-2 (5) &  3.46e-2 (4) &  4.43e-2 (2)\\
  0.75 & 1.16e-2 (1000) &  3.03e-2 (79) &  3.45e-2 (9) &  3.44e-2 (7) &  4.51e-2 (4)\\
  1.00 & 5.65e-2 (1000) &  5.66e-2 (1000)& 5.67e-2 (1000) & 5.70e-2 (1000) & 5.61e-2 (864)\\
  \bottomrule
  \end{tabular}
\end{table}

\begin{figure}[hbt!]
  \centering
  \begin{tabular}{cc}
    \includegraphics[width=0.45\textwidth,trim={1.5cm 0 1cm 0.5cm}]{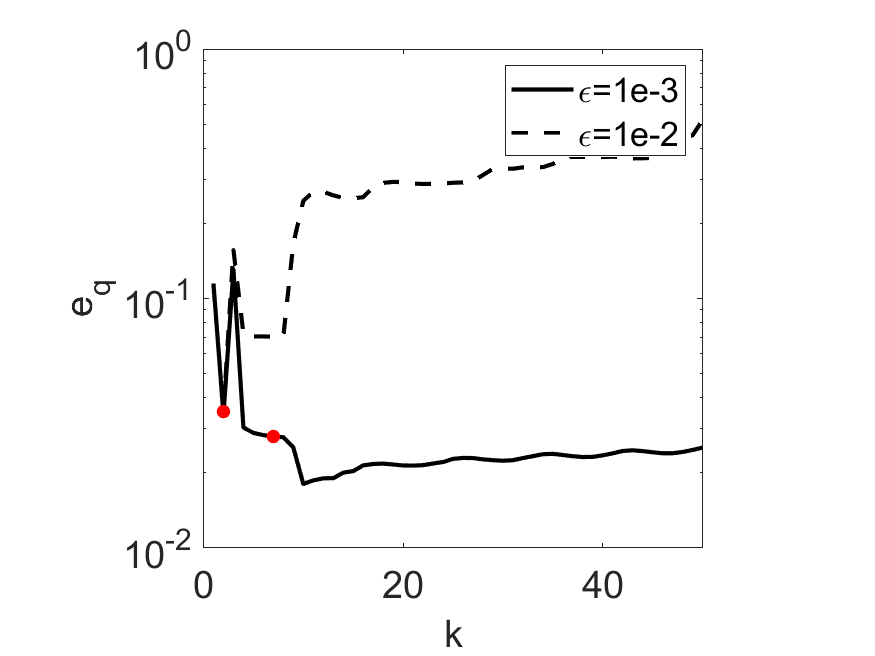} & \includegraphics[width=0.45\textwidth,trim={1.5cm 0 1cm 0.5cm}]{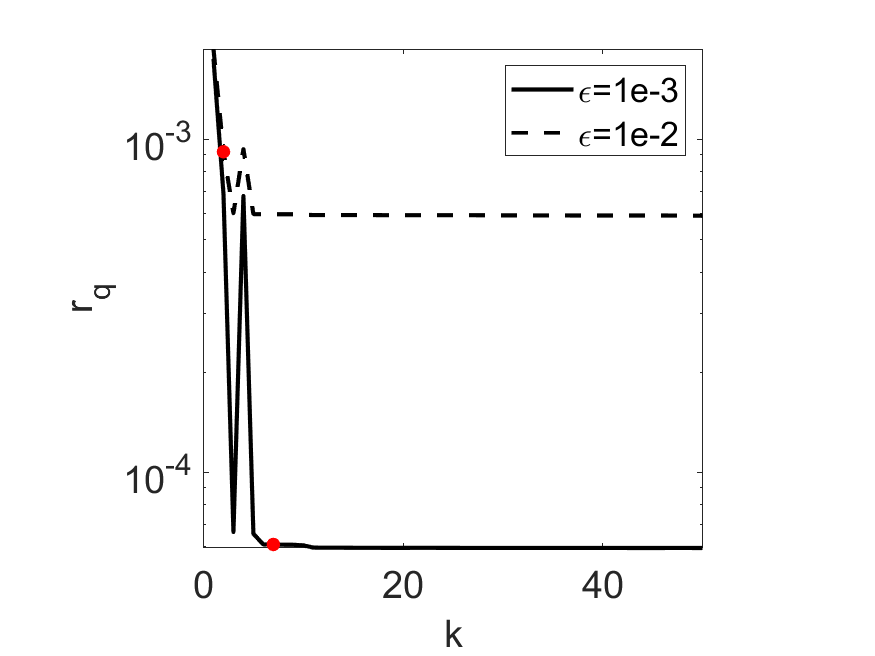}\\
    (a) error $e_q$ & (b) residual $r_q$
  \end{tabular}
  \caption{Convergence behavior of the fixed point algorithm with Anderson acceleration for Example \ref{exam:1d2} with $\alpha=0.5$.
  In the plots, the red circle refers to the stopping index determined by the discrepancy principle \eqref{eqn:fp}. \label{fig:exam2:conv-aa}}
\end{figure}

\subsection{Results for the two-dimensional case}
Last we given a two-dimensional example on the unit square $\Omega=(0,1)^2$ with a smooth coefficient. The domain
$\Omega$ is first partitioned smaller square of side length $1/M$, and then a uniform triangulation is obtained
by connecting the upper left and lower right vertices. The data is generated using $M=200$ and $N=1000$, and for the
inversion, the discretization parameters are taken to be $M=100$ and $N=500$. The fixed point algorithm \eqref{eqn:fp} is run for a maximum 200 iterations and
the relaxation parameter $\lambda$ is fixed at $100$, which is very conservative for scheme \eqref{eqn:fp}.

\begin{example}\label{exam:2d}
$u_0(x_1,x_2)=\sin^2\pi x_2$ and $q^\dag(x_1,x_2) = \sin(\pi x_1)x_2(1-x_2)$.
\end{example}

The initial condition $u_0$ does not satisfy the condition in Theorem \ref{thm:inverse}.
The numerical results for Example \ref{exam:2d} at $T=0.1$ are shown in Table \ref{tab:exam2daa} and Fig.
\ref{fig:exam2d}. With $\lambda=100$, the fixed point method \eqref{eqn:fp} can only converge
very slowly, and requires thousands of iterations to yield reasonable reconstruction, and thus
the corresponding results are not shown. Anderson
acceleration can greatly speed up the convergence, so that with any fixed noise level $\epsilon>0$, it converges
in two iterations. The method converges steadily, and the reconstruction error $e_q$ decreases
steadily as the noise level $\epsilon$ tends to zero. Up to $\epsilon$=1e-2 noise in the data, the result
represents an excellent reconstruction of the true potential $q^\dag$.

\begin{table}[hbt!]
  \centering
  \caption{The reconstruction error $e_q$ for Example \ref{exam:2d} at $T=0.1$.\label{tab:exam2daa}}
  \begin{tabular}{cccccc}
  \toprule
  $\alpha\backslash \epsilon$ & 0 & 1e-3 & 5e-3 & 1e-2 & 3e-2 \\
  \midrule
  0.25 & 2.29e-3 (200) &  6.27e-3 (2) &  6.91e-3 (2) &  8.16e-3 (2) &  1.52e-2 (2) \\
  0.50 & 5.59e-3 (200) &  6.11e-3 (2) &  6.23e-3 (2) &  6.94e-3 (2) &  1.30e-2 (2) \\
  0.75 & 1.13e-2 (200) &  8.03e-3 (2) &  7.55e-3 (2) &  7.25e-3 (2) &  9.48e-3 (2) \\
  1.00 & 1.37e-2 (200) &  1.19e-2 (5) &  1.03e-2 (2) &  9.83e-3 (2) &  8.95e-3 (2) \\
  \bottomrule
  \end{tabular}
\end{table}

\begin{figure}[hbt!]
  \centering
  \begin{tabular}{ccc}
    \includegraphics[width=0.3\textwidth,trim={1.5cm 0 1cm 0.5cm}]{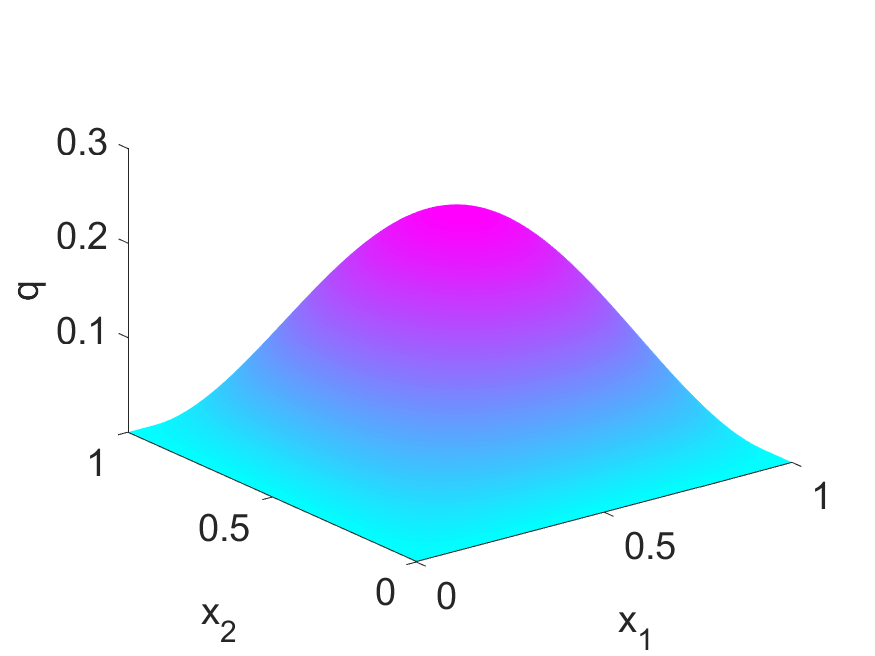} & \includegraphics[width=0.3\textwidth,trim={1.5cm 0 1cm 0.5cm}]{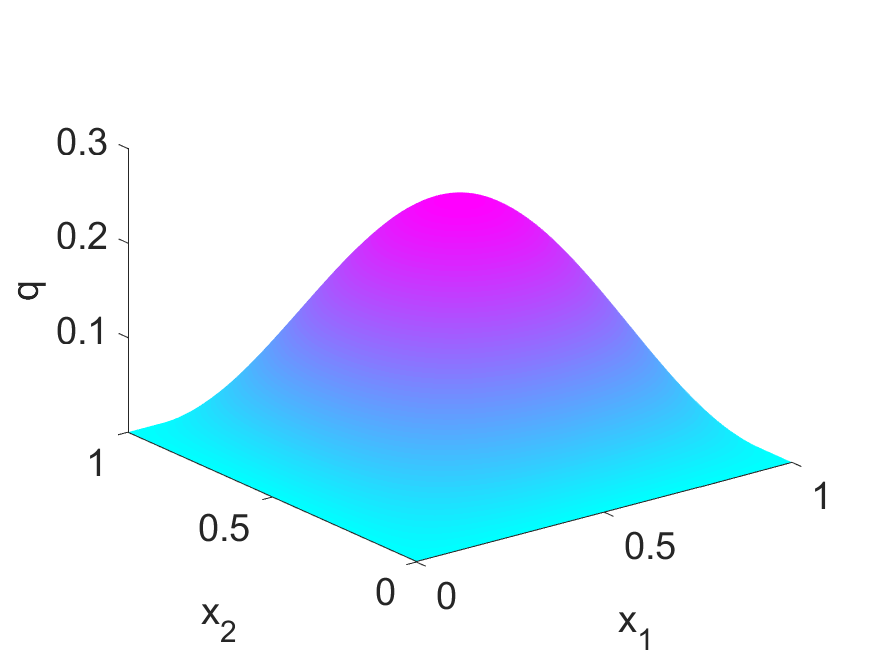}
    & \includegraphics[width=0.3\textwidth,trim={1.5cm 0 1cm 0.5cm}]{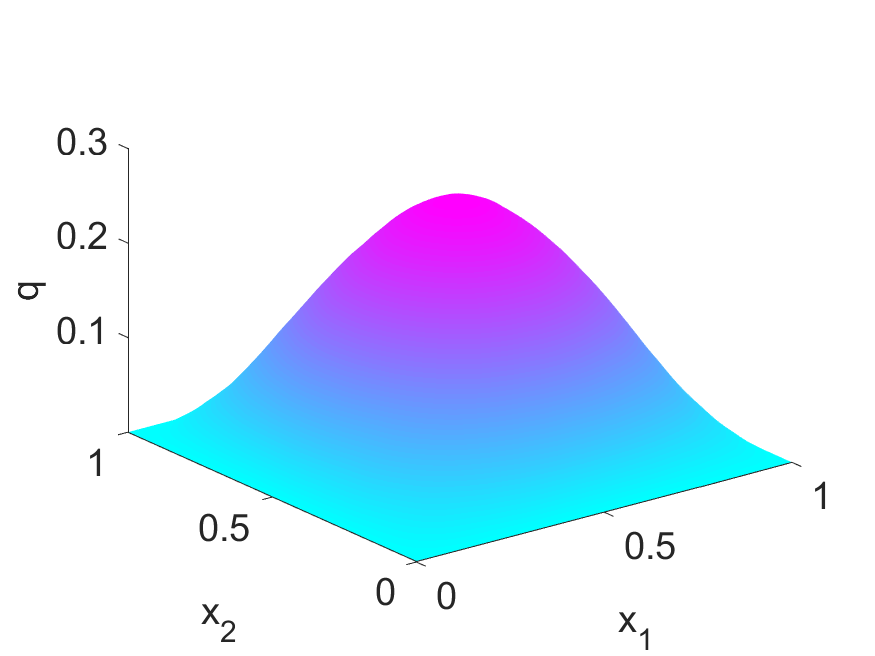} \\
    & \includegraphics[width=0.3\textwidth,trim={1.5cm 0 1cm 0.5cm}]{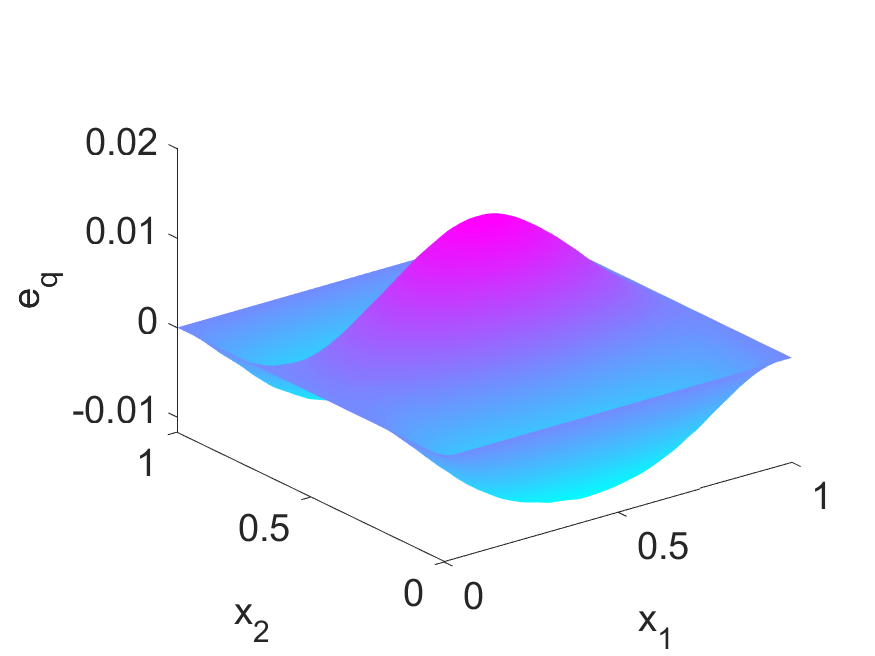} & \includegraphics[width=0.3\textwidth,trim={1.5cm 0 1cm 0.5cm}]{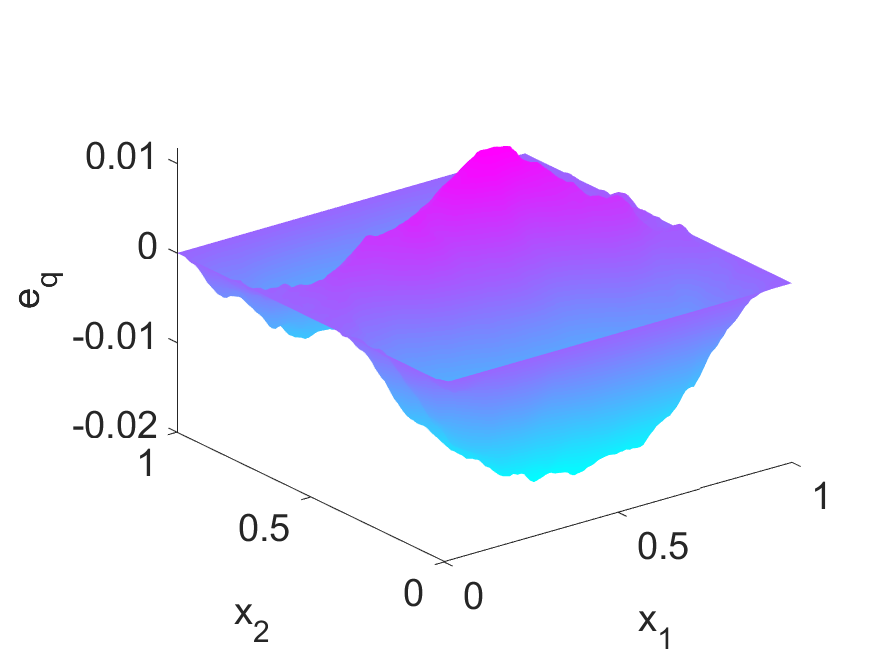} \\
   (a) exact $q^\dag $ & (b) $\epsilon$=1e-3 & (c) $\epsilon$=1e-2
  \end{tabular}
  \caption{Numerical results for Example \ref{exam:2d} with $T=0.1$ and $\alpha=0.5$: the first and second rows refer to the
  reconstruction $q$ and the pointwise error $e_q=q-q^\dag$. \label{fig:exam2d}}
\end{figure}

\section{Conclusion}

In this work, we have presented a study on the inverse problem of recovering a potential in
the subdiffusion model from terminal data. Under certain restrictions on the initial data, we
have derived a local Lipschitz stability result, using refined properties of Mittag-Leffler
functions. Further, we have developed a simple fixed point algorithm for recovering the potential
coefficient. When equipped with the discrepancy principle and Anderson acceleration, extensive
numerical experiments indicate that it is highly efficient and accurate.

There are a few interesting questions on the inverse potential problem awaiting answers.
First, the numerical experiments indicate a descent property of the fixed point iteration
for the residual, which however remains to be established in the general case. Second, it
is of much interest to analyze the regularizing property, e.g., convergence and convergence rates, of the fixed point algorithm
(and the accelerated variant) when equipped with the discrepancy principle. Third, it is natural to ask whether it is
possible to recover the potential and the fractional order $\alpha$ simultaneously from the
terminal data, and if so, also to derive relevant stability estimates. In the case of lateral
Cauchy data, it is known that one can recover the diffusion coefficient and fractional order together
\cite{ChengNakagawaYamamotoYamazaki:2009}. We shall explore these issues in future works.

\section*{Acknowledgements}
The authors are grateful to the anonymous referees and the board member for several
constructive comments, which have led to an improvement in the quality of
the paper.

\bibliographystyle{abbrv}
\bibliography{frac}
\end{document}